\newtheorem{theorem}{Theorem}[section]
\newtheorem{definition}[theorem]{Definition}
\newtheorem{claim}[theorem]{Claim}
\tikzstyle{snode}=[circle,draw=black,fill=white,thick, inner sep=0pt ,minimum size=1.2mm]
\tikzstyle{bnode}=[circle ,draw=black,fill=black,thick, inner sep=0pt ,minimum size=1.2mm]
\newenvironment {proof} {\noindent{\em Proof.}}{\hspace*{\fill}$\Box$\par\vspace{4mm}}
\newcommand{\ml}{l\kern-0.55mm\char39\kern-0.3mm}
\def\qed{\hfill \nopagebreak\rule{5pt}{8pt}}
\newenvironment{theorem-non}[1]{\trivlist \item [\hskip \labelsep {\bf #1}]\ignorespaces\it}{\endtrivlist}
\date{}
\title{Spanning trees of a claw-free graph whose reducible stems have few leaves}
\author{\small Pham Hoang Ha 
	\\[0.2cm]
	\small Department of Mathematics\\
	\small Hanoi National University of Education \\
	\small 136 XuanThuy Street, Hanoi, Vietnam \\
	{\small E-mail: ha.ph@hnue.edu.vn}	
}
\begin{document}
	\maketitle
	
	\begin{abstract}
		Let $T$ be a tree, a vertex of degree one is a \emph{leaf} of $T$
		and a vertex of degree at least three is a \emph{branch vertex} of
		$T$. For two distinct vertices $u,v$ of $T$, let $P_T[u,v]$ denote the unique path in $T$ connecting $u$ and $v.$ For a leaf $x$ of $T$, let $y_x$ denote the  nearest branch vertex to $x$. For
		every leaf $x$ of $T$, we remove the path $P_T [x, y_x)$ from $T$, where $P_T [x, y_x)$ denotes the
		path connecting $x$ to $y_x$ in $T$ but not containing $y_x$. The resulting subtree of $T$ is called the {\it reducible stem } of $T$. In this paper, we first use a new technique of Gould and Shull to state a new short proof for a result of Kano et al. on the spanning tree with a bounded number of leaves in a claw-free graph. After that, we use that proof to give a sharp sufficient condition for a claw-free graph having a spanning tree whose reducible stem has few leaves.
		
\noindent {\bf Keywords:} spanning tree, leaf, claw-free graph, reducible stem

\noindent {\bf AMS Subject Classification:} Primary 05C05, 05C70. Secondary 05C07, 05C69
	\end{abstract}
	
	\section{Introduction}
	In this paper, we only consider finite graphs without loops or multiple edges. Let $G$ be a
	graph with vertex set $V(G)$ and edge set $E(G)$. For any vertex
	$v\in V(G)$, we use $N_G(v)$ and $\deg_G(v)$ (or $N(v)$ and $\deg (v)$ if
	there is no ambiguity) to denote the set of neighbors of $v$ and the
	degree of $v$ in $G$, respectively. For any $X\subseteq V(G)$, we
	denote by $|X|$ the cardinality of $X$. Sometime, we denote by $|G|$ instead of $|V(G)|.$ We define
	$N_G(X)=\bigcup\limits_{x\in X}N_G(x)$ and $\deg_G(X)=\sum\limits_{x\in
		X}\deg_G(x)$. The subgraph of
	$G$ induced by $X$ is denoted by $G[X]$. We define $G-uv$ to be the
	graph obtained from $G$ by deleting the edge $uv\in E(G)$, and
	$G+uv$ to be the graph obtained from $G$ by adding a new edge $uv$ joining two non-adjacent vertices $u$ and $v$ of $G$.  For two vertices $u$ and $v$ of $G$, the distance between $u$ and $v$ in $G$ is denoted 
	by $d_{G}(u, v)$. We use $K_n$ to denote the
	complete graph on $n$ vertices. We write $A:=
	B$ to rename $B$ as $A$. We refer
	to~\cite{Di05} for terminology and notation not defined here.
	
	For an integer $m\geq 2,$ let $\alpha^{m}(G)$ denote the number defined by
	\begin{center} $
		\alpha^{m}(G)=\max\{ |S|:S\subseteq V(G),d_{G}(x,y)\geq m\, $ for all distinct vertices $ \,x,y\in S \}.$\end{center}
	For an integer $p\geq 2$, we define
	\begin{center}$\sigma_p^{m}(G)=\min\{\deg_G(S) : S\subseteq V(G), |S|=p, d_{G}(x,y)\geq m $  for all distinct vertices $ \;x,y\in S \}.$\end{center}
	For convenience, we define $\sigma^{m}_{p}(G)=+\infty$ if $\alpha^{m}(G)<p$. We note that,  $\alpha^{2}(G)$ is often written $\alpha(G)$, which is independence number of $G,$ and $\sigma_p^{2}(G)$ is often written $\sigma_{p}(G)$, which is minimum degree sum of $p$ independent vertices.  
	
	Let $T$ be a tree. A vertex of degree one is a \emph{leaf} of $T$
	and a vertex of degree at least three is a \emph{branch vertex} of
	$T$. The set of leaves of $T$
	is denoted by $L(T)$ and the set of branch vertices of $T$ is denoted by $B(T)$. The subtree $T-L(T)$ of $T$ is called the {\it stem} of $T$ and is denoted by $Stem(T)$. 
	
	For two distinct vertices $u,v$ of $T$, let $P_T[u,v]$ denote the unique path in $T$ connecting $u$ and $v.$ For a leaf $x$ of $T$, let $y_x$ denote the  nearest branch vertex to $x$. For
	every leaf $x$ of $T$, we remove the path $P_T [x, y_x)$ from $T$, where $P_T [x, y_x)$ denotes the
	path connecting $x$ to $y_x$ in $T$ but not containing $y_x$. Moreover, the path $P_T [x, y_x)$ is
	called the {\it leaf-branch path of $T$ incident to $x$ and $y_x$} and denoted by $B_x$.   The resulting subtree of $T$ is called the {\it reducible stem } of $T$ and denoted by $R\_Stem(T)$. 
	
	There are several sufficient conditions (such as the independence
	number conditions and the degree sum conditions) for  a
	graph $G$ to have  a spanning tree with a bounded number of leaves
	or branch vertices. Win~\cite{Wi} obtained the following theorem, which confirms a conjecture of Las Vergnas~\cite{LV71}. Beside that, recently, the author \cite{H} also gave an improvement of Win by giving an independence number condition for a graph having a spanning tree which covers a certain subset of $V(G)$ and has at most $l$ leaves.
	
	\begin{theorem}\label{t1}{\rm (Win~\cite{Wi})}
		Let $m\geq 1$ and $l\geq 2$ be integers and let $G$ be a $m$-connected graph.  If
		$\alpha(G)\leq m+l-1$, then $G$ has a spanning tree with at most $l$
		leaves.
	\end{theorem}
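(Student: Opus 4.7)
The plan is to argue by contradiction: assume $G$ has no spanning tree with at most $l$ leaves, and among all spanning trees of $G$ choose $T$ minimizing $|L(T)|$, so that $|L(T)|\ge l+1$. The goal is then to build an independent set $S\subseteq V(G)$ with $|S|\ge m+l$, contradicting $\alpha(G)\le m+l-1$. The natural candidate for $S$ is $L(T)$ together with a few extra vertices supplied by the $m$-connectivity of $G$.

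The first step is to show that $L(T)$ itself is independent in $G$. If two leaves $x,y\in L(T)$ were joined by an edge in $G$, then $T+xy$ would contain a unique cycle $C$ with $x$ and $y$ now of degree $2$. I would look for an edge $e\in E(C)\setminus\{xy\}$ whose removal yields a spanning tree with strictly fewer leaves than $T$; the natural choice is an edge both of whose endpoints are branch vertices of $T$, since then deleting $e$ creates no new leaf while removing $x,y$ from $L(T)$. A walk along $C$ starting from $x$ or $y$, combined with the fact that $P_T[x,y]$ must encounter enough high-degree vertices when $|L(T)|\ge l+1\ge 3$, should locate such an $e$. This contradicts the minimality of $|L(T)|$ and shows $L(T)$ is independent.

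The second step is to adjoin $m-1$ further vertices to $L(T)$ while preserving independence. Here I would exploit the $m$-connectivity of $G$ via a Menger-type argument: fix a leaf $x$ of $T$, and note that removing fewer than $m$ vertices keeps $G$ connected, so there exist vertices far from the remaining leaves in $T$ that are non-adjacent to every element of $L(T)$. Refining the choice of $T$ with a secondary minimization (for instance, lexicographically minimizing $|L(T)|$ and then $\sum_{v\in L(T)}\deg_T(v^+)$, where $v^+$ is the tree-neighbor of $v$) should force the second neighborhoods of the leaves to contain many pairwise non-adjacent vertices available for $S$.

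The main obstacle I expect is precisely this second step: producing exactly $m-1$ additional vertices that are simultaneously non-adjacent to one another and to every leaf of $T$, with only the minimality of $T$ and the connectivity bound at one's disposal. The first step is essentially a local exchange argument, but the augmentation must globally coordinate the tree structure with the $m$-cuts of $G$. I suspect that the Gould--Shull technique alluded to in the abstract streamlines exactly this augmentation, replacing a direct Menger invocation by a cleaner extremal step on $T$; short of that, a careful iterative construction using Menger's theorem to dodge the closed neighborhoods of already chosen leaves should still deliver the required set of size $m+l$.
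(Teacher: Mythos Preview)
The paper does not prove Theorem~\ref{t1} at all: Win's result is only quoted as background, and the Gould--Shull technique you allude to is deployed for Theorem~\ref{thm1} (the claw-free degree-sum theorem), not for Win's independence-number theorem. So there is no ``paper's own proof'' to compare against here.

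On the merits of your sketch: the first step is fine. If two leaves $x,y$ of a leaf-minimal spanning tree $T$ were adjacent in $G$, then since $|L(T)|\ge l+1\ge 3$ the path $P_T[x,y]$ contains a branch vertex $b$; deleting the edge of $P_T[x,y]$ incident with $b$ and adding $xy$ already drops the leaf count by at least one, so you do not even need two adjacent branch vertices on the cycle.

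The second step, however, is not a proof but a wish. Your plan to ``adjoin $m-1$ further vertices'' via Menger or via a secondary lexicographic minimization is never made precise, and there is no reason the construction you describe should produce vertices non-adjacent to \emph{every} leaf simultaneously. The $m$-connectivity enters Win's argument in a much more structured way: one fixes a leaf $u$, uses the $m$ internally disjoint $u$--$v$ paths (for a suitable $v$) together with leaf-minimality to control how the other leaves can attach along these paths, and from this extracts the required large independent set. Your proposal does not identify this mechanism, and the ad hoc ``dodge the closed neighborhoods iteratively'' idea fails in general because the closed neighborhoods of $l+1$ leaves can cover essentially all of $V(G)$ without violating $m$-connectivity. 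As it stands, the augmentation step is a genuine gap.
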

	Later, Broersma and Tunistra gave the following degree sum condition for a graph to have a spanning tree with at most $l$ leaves.
	\begin{theorem}\label{t2}{\rm (Broersma and Tuinstra~\cite{BT98})}
		Let $G$ be a connected graph and let $l\geq2$ be an integer. If
		$\sigma_2(G)\geq |G|-l+1$, then $G$ has a spanning tree with at most
		$l$ leaves.
	\end{theorem}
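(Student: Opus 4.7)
The plan is a proof by contradiction via an extremal spanning tree. Assume that $G$ is connected with $\sigma_2(G) \geq |G|-l+1$ but every spanning tree of $G$ has at least $l+1$ leaves. Choose a spanning tree $T$ of $G$ that first minimizes $|L(T)|$ and, subject to that, maximizes the total length $\sum_{x \in L(T)}|B_x|$ of the leaf-branch paths. Write $k := |L(T)| \geq l+1$. The goal is to exhibit two non-adjacent vertices $u,v$ of $G$ with $\deg_G(u)+\deg_G(v) \leq |G|-l$, contradicting the hypothesis.

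The first step is to show that $L(T)$ is independent in $G$. Given leaves $u, v$ of $T$ with $uv \in E(G)$, let $u'$ denote the unique $T$-neighbor of $u$; the exchange $T' := T - uu' + uv$ is a spanning tree in which $u$ remains a leaf but $v$ (now with $T'$-degree $2$) does not. If $\deg_T(u') \geq 3$, then $u'$ is still internal in $T'$, so $|L(T')| = k-1$, contradicting the minimum choice. If $\deg_T(u') = 2$, then $u'$ becomes a new leaf of $T'$ and $|L(T')| = k$; here the leaf-branch path of $T'$ incident to $u$ absorbs $u'$ and is strictly longer than $B_u$ was in $T$, violating the secondary extremality.

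Next, choose $u, v \in L(T)$ to be endpoints of a longest path in $T$. By independence, $uv \notin E(G)$, hence the hypothesis gives $\deg_G(u) + \deg_G(v) \geq |G|-l+1$; I will show the opposite inequality $\deg_G(u)+\deg_G(v) \leq |G|-l$. Since the $k-2 \geq l-1$ other leaves of $T$ lie outside $N_G(u) \cup N_G(v)$ and so do $u,v$ themselves, one has $|N_G(u) \cup N_G(v)| \leq |G|-k \leq |G|-l-1$, so it suffices to bound $|N_G(u) \cap N_G(v)| \leq k-l$. For each common $G$-neighbor $w$ of $u$ and $v$, the double exchange $T - uu' - vv' + uw + vw$ (with single-edge variants when $w$ lies on the path from $u$ to $v$ in $T$) produces a new spanning tree whose leaf profile and leaf-branch length can be compared against those of $T$; the bound on common neighbors will follow from the extremality of $T$ together with the diameter-maximizing choice of $u,v$.

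I expect the main obstacle to be in this last double-exchange analysis, in which the correct secondary parameter must strictly improve under every admissible swap while connectivity is preserved. If the quantity $\sum_x|B_x|$ proves insufficient for Step 2, I would augment it with a tertiary criterion (for instance the lexicographically smallest sequence of $T$-degrees at internal vertices adjacent to leaves). The degenerate case $|B(T)| = 0$, where $T$ is a path, does not need to be treated since then $k = 2$, which is incompatible with the standing assumption $k \geq l+1 \geq 3$.
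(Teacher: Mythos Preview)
First, note that the paper does not give its own proof of Theorem~\ref{t2}; the result is quoted from \cite{BT98} as background, so there is no in-paper argument to compare your attempt against. What follows are comments on the soundness of your sketch itself.

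Your Step~1 contains a concrete error. When $\deg_T(u')=2$ and you pass to $T'=T-uu'+uv$, the leaf-branch path of $u$ in $T'$ does \emph{not} ``absorb $u'$'': in $T'$ the unique neighbor of $u$ is $v$, so $B_u^{T'}$ runs $u,v,\dots$ along the old $B_v$, while $u'$ becomes a new leaf whose leaf-branch path is the old $B_u$ with the vertex $u$ deleted. Since the only degree changes are $u'\colon 2\to 1$ and $v\colon 1\to 2$, the branch vertex set is unchanged, and a direct count gives $|B_u^{T'}|=|B_v^{T}|+1$ and $|B_{u'}^{T'}|=|B_u^{T}|-1$ with all other $|B_x|$ unchanged. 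Hence $\sum_{x\in L(T')}|B_x|=\sum_{x\in L(T)}|B_x|$ exactly; your secondary criterion is not violated and the independence of $L(T)$ does not follow from it.

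Your Step~2 is, as you yourself say, only a plan, and the proposed mechanism is problematic. The exchange $T-uu'-vv'+uw+vw$ keeps $u$ and $v$ as leaves and may turn both $u'$ and $v'$ into new leaves while merely raising the degree at $w$; neither $|L(\cdot)|$ nor $\sum_x|B_x|$ is forced to move, so the extremality of $T$ yields nothing. More fundamentally, the target bound $|N_G(u)\cap N_G(v)|\le k-l$ involves the hypothesis parameter $l$, which no local exchange at a single common neighbor can see: a swap at $w$ can at best certify some structural fact about $w$ in $T$ (that $w$ is a branch vertex, that $w\in P_T[u,v]$, etc.), not a global count calibrated against $k-l$. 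To rescue the inclusion--exclusion strategy you would have to attach to each common neighbor a \emph{distinct} leaf of $T$ outside $\{u,v\}$, and your exchanges do not do that. The argument in \cite{BT98} avoids common neighbors altogether and uses an Ore-type shift along $P_T[u,v]$ together with leaf-minimality to bound $\deg_G(u)+\deg_G(v)$ directly; that route is likely cleaner than trying to patch the present one with further tiebreakers.
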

	Motivating by Theorem \ref{t1}, a natural question is whether we can find sharp sufficient conditions of $\sigma_{l+1}(G)$ for a connected graph $G$ has a few leaves. This question is still open. But, in certain graph classes, the answers have been determined.
	
	For a positive integer $t \geq 3,$ a graph $G$ is said to be  $K_{1,t}-$ free graph if it contains no $K_{1,t}$ as an induced subgraph. If $t=3,$ the $K_{1,3}-$ free graph is also called the claw-free graph.  About this graph class, Kano, Kyaw, Matsuda, Ozeki, Saito and Yamashita proved the following theorem.
	\begin{theorem}\label{thm1}{\rm (Kano et al.~\cite{KKMOSY12})}
		Let $G$ be a connected claw-free graph and let $l\geq2$ be an integer. If
		$\sigma_{l+1}(G)\geq |G|-l$, then $G$ has a spanning tree with at most
		$l$ leaves.
	\end{theorem}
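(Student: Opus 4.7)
The plan is to argue by contradiction. Suppose $G$ is a connected claw-free graph with $\sigma_{l+1}(G)\geq |G|-l$ which admits no spanning tree with at most $l$ leaves. I would select a spanning tree $T$ of $G$ that first minimizes $|L(T)|$ and, subject to that, is extremal with respect to a secondary parameter in the spirit of Gould and Shull --- for example, one that minimizes $\sum_{u\in L(T)} d_T(u,B(T))$, the total distance in $T$ from each leaf to its nearest branch vertex. Write $\ell:=|L(T)|\geq l+1$ and fix a subset $U=\{u_1,\ldots,u_{l+1}\}\subseteq L(T)$; let $v_i$ denote the unique $T$-neighbor of $u_i$.

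The first step is an exchange lemma: for any $u_i$ and any $w\in N_G(u_i)\setminus\{v_i\}$, form a new spanning tree by adding $u_iw$ and deleting a suitable edge of the unique cycle created. The extremality of $T$ then forbids $w$ from lying in various ``good'' positions of $T$; in particular, $w$ cannot be adjacent to, or equal to, another leaf of $T$, and the removed edge cannot have a degree-$2$ endpoint unless every alternative choice is blocked. This is where claw-freeness enters crucially: at any vertex $x$ of $T$ lying on such a cycle, the absence of an induced $K_{1,3}$ centered at $x$ forces two of the three relevant neighbors (one from $T$, two potential swap partners) to be adjacent, which is exactly what is needed to exhibit a non-leaf-creating edge to delete.

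Combining the exchange lemma with the secondary extremality should yield two conclusions: (i) $U$ is an independent set in $G$, so $\deg_G(U)\geq \sigma_{l+1}(G)\geq |G|-l$; and (ii) for each $u_i$ a certain ``forbidden'' subset $F_i\subseteq V(G)$ is disjoint from $N_G(u_i)\cup\{u_i\}$, where the $F_i$ collectively cover $U$ together with at least one extra vertex per leaf of $T$ outside $U$. A careful accounting then gives $\deg_G(U)\leq |G|-l-1$, contradicting (i) and completing the proof.

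The main obstacle, as is usual for such degree-sum arguments, is step (ii): converting the pointwise obstructions produced by the exchange lemma into a global inequality that improves the trivial bound by exactly the required amount. The Gould--Shull secondary extremality is precisely the mechanism that makes this accounting clean; without it one would have to handle a thicket of cases according to the shape of the ``leaf-branch paths'' attached to each $u_i$, whereas with it each leaf behaves essentially as if attached directly to a branch vertex, reducing the global estimate to local combinatorics around the vertices $v_i$ governed by the claw-free condition.
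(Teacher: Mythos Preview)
Your proposal is a sketch rather than a proof: you explicitly flag step (ii) as ``the main obstacle'' and leave the forbidden sets $F_i$ undefined, so the crucial inequality $\deg_G(U)\le |G|-l-1$ is never actually established. Moreover, you misidentify the Gould--Shull mechanism used in the paper. The paper does \emph{not} use any secondary extremality (no minimization of $\sum_{u\in L(T)}d_T(u,B(T))$); it uses only (T1): $|L(T)|$ minimal. The Gould--Shull ingredient is instead the notion of an \emph{oblique neighbor}: for $e\in E(T)$ and $v\in V(G)$, $v$ is an oblique neighbor of $e$ if $v$ is adjacent in $G$ to the endpoint of $e$ farther from $v$ in $T$. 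Two vertices are \emph{pseudoadjacent} if some edge of $T$ has both as oblique neighbors.

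With this language the paper's counting is short and exact. First, $L(T)$ is \emph{pseudoindependent}: if two leaves $s,t$ were both oblique neighbors of some $e$, a direct edge swap (with a small claw-free case split when $g(e,s)=g(e,t)$) would reduce $|L(T)|$. Second, for each branch vertex $b$, at least $\deg_T(b)-1$ of the edges $bs_t$ with $s_t\in N_T(b)$ have \emph{no} oblique neighbor in $L(T)$: claw-freeness at $b$ forces all but at most one $s_t$ to have a neighbor $s_i\in N_T(b)$, and if any leaf $x$ were an oblique neighbor of $bs_t$ one could reroute through $s_ts_i$ to drop a leaf. Summing over $B(T)$ and subtracting possible double counts gives at least $\sum_{b\in B(T)}(\deg_T(b)-2)+1=|L(T)|-1\ge l$ edges of $T$ with no oblique neighbor in $L(T)$. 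Since for every vertex $v$ one has $\deg_G(v)=|\{e\in E(T):v\text{ is an oblique neighbor of }e\}|$, and each edge has at most one oblique neighbor in $L(T)$ by pseudoindependence, any $l+1$ leaves satisfy
\[
\sum_{i=1}^{l+1}\deg_G(u_i)\ \le\ |E(T)|-l\ =\ |G|-l-1,
\]
the desired contradiction.

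So the gap in your plan is precisely the accounting you deferred: rather than building per-leaf forbidden sets $F_i$ and invoking a secondary extremality to tame leaf-branch paths, the paper counts \emph{edges of $T$} via oblique neighbors, which automatically globalizes the local claw-free obstructions at branch vertices and yields the exact deficit $l$ without any further case analysis.
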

	For other graph classes, we refer the readers to see \cite{CCH14}, \cite{CHH}, \cite{GS}, \cite{Ky09}, \cite{Ky11} and \cite{MMM} for examples.
	
	The first main purpose of this paper is to give a new short proof for Theorem \ref{thm1} base on the new technique of Gould and Shull in \cite{GS}.
	
	Moreover, many researchers studied spanning trees in connected graphs whose stems have a bounded number of leaves or branch vertices (see  \cite{HH}, \cite{KY}, \cite{KY15}, \cite{TZ} and \cite{Yan} for more details). We introduce here some results on spanning trees whose stems have a few leaves or branch vertices. 
	\begin{theorem}{\rm (Tsugaki and Zhang~\cite{TZ})}
		Let $G$ be a connected graph and let $k\geq 2$ be an integer. If $\sigma_{3}(G)\geq|G|-2k+1$, then $G$ have a spanning tree whose stem has at most $k$ leaves.
	\end{theorem}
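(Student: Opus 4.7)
The plan is a proof by contradiction in the spirit of the exchange-technique template used for Theorem~\ref{t2}. Suppose $G$ has no spanning tree whose stem has at most $k$ leaves, and choose a spanning tree $T$ of $G$ that (i) minimizes $|L(Stem(T))|$, and, subject to this, (ii) maximizes $|L(T)|$ (equivalently, forces as many vertices as possible out of the stem). By hypothesis $|L(Stem(T))|\geq k+1$, so we may fix $k+1$ distinct stem-leaves $x_0,x_1,\dots,x_k$; each $x_i$ has exactly one non-leaf $T$-neighbor together with at least one leaf $T$-neighbor, so we may additionally pick $\ell_i\in L(T)\cap N_T(x_i)$ for every $i$.

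The first substantive step is to extract a triple $\{u_1,u_2,u_3\}$ from $\{x_0,\dots,x_k\}$, or from the companion set $\{\ell_0,\dots,\ell_k\}$, that is independent in $G$. For any putative edge between two candidates, the fundamental cycle it creates in $T$ passes through the associated pendant edges $x_i\ell_i$ and $x_j\ell_j$, and deleting a well-chosen edge on the cycle yields a new spanning tree $T'$ that either decreases $|L(Stem(\cdot))|$ or keeps it fixed while increasing $|L(\cdot)|$; either outcome violates the optimality of $T$. Combining these forbidden-edge deductions with a pigeonhole argument over the $k+1\geq 3$ indices produces the desired independent triple.

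The second step is a degree-sum bound. For each $u_r$ and each $v\in N_G(u_r)\setminus\{x_{i_r}\}$, adding $u_r v$ to $T$ creates a fundamental cycle, and deleting a carefully chosen edge of that cycle gives an alternative spanning tree whose stem-leaf count must again be at least $|L(Stem(T))|$; this optimality constraint forces $v$ to avoid specific structural regions of $T$. The union of such forbidden regions, taken across the three $u_r$'s, can be arranged to contain the $u_r$'s themselves together with the remaining $k-2$ stem-leaves $x_j$ and their pendant leaves $\ell_j$, plus one further auxiliary vertex pinned down by the secondary optimality, accounting in total for at least $2k$ vertices. An additional exchange argument shows that the three $G$-neighborhoods are essentially disjoint outside a small controlled boundary, yielding
\begin{equation*}
\deg_G(u_1)+\deg_G(u_2)+\deg_G(u_3)\leq |V(G)|-2k,
\end{equation*}
which contradicts $\sigma_3(G)\geq |G|-2k+1$.

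The main obstacle is the second step: each potential $G$-neighbor $v$ of a chosen $u_r$ demands its own case analysis depending on where $v$ sits in $T$ (on a pendant edge, on the path from $x_{i_r}$ to its nearest branch vertex, at a branch vertex, or on the path between two stem-leaves), and the three forbidden sets have to interact cleanly enough to avoid double-counting while still recovering the exact $2k$-vertex deficit. The two-level optimality of $T$ must be invoked consistently in both the independence extraction and the neighborhood bound, and it is this bookkeeping, rather than any single deep structural lemma, that makes the argument delicate.
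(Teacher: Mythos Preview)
The paper does not contain a proof of this statement: the theorem of Tsugaki and Zhang is quoted in the introduction as background and attributed to~\cite{TZ}, with no argument given here. There is therefore nothing in the present paper to compare your proposal against.

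As for the proposal itself, it is explicitly a plan rather than a proof: you identify the two substantive steps (extracting an independent triple and bounding its degree sum by $|G|-2k$) but carry out neither. In the first step, the sentence ``a pigeonhole argument over the $k+1\geq 3$ indices produces the desired independent triple'' hides the actual content: you would need to show concretely that any edge between two of the $\ell_i$ (or between an $\ell_i$ and an $x_j$) violates the optimality of $T$, and it is not clear that your secondary condition of \emph{maximizing} $|L(T)|$ is the right one for this---the standard move in Tsugaki--Zhang-type arguments is to minimize a stem parameter (e.g.\ $|Stem(T)|$) as the secondary criterion. In the second step, the phrases ``forbidden regions \dots\ accounting in total for at least $2k$ vertices'' and ``essentially disjoint outside a small controlled boundary'' are precisely where the proof lives; you have named the target inequality but not shown how the exchange arguments actually pin down the $2k$-vertex deficit or the near-disjointness of the three neighborhoods. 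Your closing paragraph candidly says the bookkeeping is ``delicate'' and is ``the main obstacle''---which is an accurate diagnosis, but means what you have written is an outline of work to be done, not a proof.
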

	\begin{theorem}{\rm (Kano and Yan~\cite{KY})}\label{KY1}
		Let $G$ be a connected graph and let $k\geq 2$ be an integer. If either $\alpha^{4}(G)\leq k$ or $\sigma_{k+1}(G)\geq|G|-k-1$, then G has a spanning tree  whose stem has at most $k$ leaves.
	\end{theorem}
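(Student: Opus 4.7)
The plan is a proof by contradiction built around a carefully chosen extremal spanning tree. Suppose $G$ satisfies one of the two hypotheses but has no spanning tree whose stem has at most $k$ leaves. Choose a spanning tree $T$ of $G$ minimizing the lexicographic pair $\bigl(|L(\mathrm{Stem}(T))|,\,|L(T)|\bigr)$. By assumption $|L(\mathrm{Stem}(T))|\geq k+1$, so I fix distinct leaves $u_1,\ldots,u_{k+1}$ of $\mathrm{Stem}(T)$ and, for each $i$, a leaf $v_i\in L(T)$ with $v_i\in N_T(u_i)$; such $v_i$ exists because each leaf of $\mathrm{Stem}(T)$ has at least one leaf of $T$ as a neighbor. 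Write $S=\{v_1,\ldots,v_{k+1}\}$ and $B_i=\{u_i\}\cup\bigl(L(T)\cap N_T(u_i)\bigr)$.

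The heart of the argument is to establish two structural properties of $S$: (A) $d_G(v_i,v_j)\geq 4$ for all $i\neq j$, and (B) $N_G(v_i)\subseteq B_i$ for every $i$. From (A), $|S|=k+1$ witnesses $\alpha^{4}(G)\geq k+1$, which immediately contradicts the first hypothesis. For the second hypothesis, I use that the $u_i$'s are distinct leaves of $\mathrm{Stem}(T)$, so the bundles $B_i$ are pairwise disjoint, and no vertex of $\mathrm{Stem}(T)\setminus\{u_1,\ldots,u_{k+1}\}$ belongs to any $B_i$. Since $v_i\in B_i$, property (B) gives $\deg_G(v_i)\leq|B_i|-1$, and because $\mathrm{Stem}(T)$ is a tree with at least $k+1\geq 3$ leaves it has at least $k+2$ vertices, so
\[
\sum_{i=1}^{k+1}\deg_G(v_i)\ \leq\ \Bigl|\bigcup_{i=1}^{k+1}B_i\Bigr|-(k+1)\ \leq\ |G|-|\mathrm{Stem}(T)|\ \leq\ |G|-k-2.
\]
Combined with (A), which certifies that $S$ is an eligible $(k+1)$-set in the definition of $\sigma_{k+1}$, this contradicts $\sigma_{k+1}(G)\geq |G|-k-1$.

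The genuine work is the simultaneous proof of (A) and (B) by tree exchange, which I expect to be the main obstacle. Assuming a violation of (B), say $w\in N_G(v_i)\setminus B_i$, the natural swap $T^{\ast}=T-u_iv_i+v_iw$ modifies $T$ only locally around $v_i$; a case split on whether $w\in L(T)$, on whether $u_i$ has exactly one leaf neighbor in $T$ (so that it may slip out of $\mathrm{Stem}(T^{\ast})$), and on whether the unique $\mathrm{Stem}(T)$-neighbor of $u_i$ survives as a leaf of the new stem, shows that $\bigl(|L(\mathrm{Stem}(T^{\ast}))|,|L(T^{\ast})|\bigr)$ is lexicographically smaller than the pair for $T$, contradicting the choice of $T$. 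Similarly, a violation of (A) provides a path of length at most $3$ from $v_i$ to $v_j$ containing a non-tree edge $e$; adding $e$ to $T$ and deleting a judiciously chosen edge of the resulting cycle (typically $u_iv_i$ or $u_jv_j$) again produces a spanning tree beating $T$ in the extremal order. The delicate part is selecting the correct edge to delete, ruling out the creation of \emph{new} leaves of the stem elsewhere in the tree, and carrying the bookkeeping through every subcase; this is where the force of the double-minimality of $T$ is felt.
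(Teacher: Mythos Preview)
First, note that the paper does not actually prove this theorem; it is merely quoted as a result of Kano and Yan, so there is no in-paper argument to compare against. Assessing your proposal on its own merits, the overall strategy is natural, but properties (A) and (B) are claimed in a form that is simply false, and the single-edge exchange you describe does not always produce a lexicographically smaller tree. Take $T$ to be the tree on $\{v_1,u_1,p,c,u_2,v_2,u_3,v_3\}$ with edge set $\{v_1u_1,\,u_1p,\,pc,\,cu_2,\,u_2v_2,\,cu_3,\,u_3v_3\}$, and let $G=T+v_1c$. One checks directly that \emph{every} spanning tree of $G$ has a stem with exactly three leaves, and that $T$ already minimises the pair $(|L(\mathrm{Stem}(\cdot))|,|L(\cdot)|)$. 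Yet $c\in N_G(v_1)\setminus B_1$, so (B) fails; and $d_G(v_1,v_2)=3$, so (A) fails. The swap $T^{\ast}=T-u_1v_1+v_1c$ has $|L(\mathrm{Stem}(T^{\ast}))|=3$ and $|L(T^{\ast})|=4$, which is lexicographically \emph{worse}; the other possible swaps fare no better. The obstruction is precisely the subcase you yourself flagged as ``delicate'': when $u_i$ has a unique leaf neighbour in $T$ and the unique $\mathrm{Stem}(T)$-neighbour of $u_i$ has degree~$2$ in $\mathrm{Stem}(T)$, deleting $u_iv_i$ turns that neighbour into a new stem leaf and nothing is gained.

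To rescue the argument you must either strengthen the extremal choice of $T$ (for instance, additionally minimise $|\mathrm{Stem}(T)|$ subject to the first two conditions, in the spirit of (C0)--(C4) in the paper's proof of Theorem~\ref{thm2}) so that the exchanges really do improve $T$, or replace the too-strong inclusion (B) by a counting inequality along each leaf-branch path of $T$ that still yields $\sum_i\deg_G(v_i)\le|G|-k-2$. Likewise, (A) needs a more careful witness: in the example above the correct distance-$4$ triple is $\{u_1,v_2,v_3\}$, not $\{v_1,v_2,v_3\}$, so the leaves $v_i$ you pick are not always the right vertices to certify $\alpha^{4}(G)\ge k+1$. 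As written, the proposal identifies the right extremal object and the right target inequalities, but the central structural claims (A) and (B) are unproved and, in the stated form, unprovable.
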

	\begin{theorem}{\rm (Yan \cite{Yan})} \label{thm-1}
		Let $G$ be a connected graph and $k \geq 0$ be an integer. If one of the following conditions holds, then $G$ have a spanning tree whose stem has at most $k$ branch vertices.
		\begin{enumerate}
			\item[{\rm (a)}]$\alpha^{4}(G)\leq k+2,$
			\item[{\rm (b)}]$\sigma^{4}_{k+3}(G)\geq |G|-2k-3.$
		\end{enumerate}
	\end{theorem}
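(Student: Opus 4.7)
We argue by contradiction, using the extremal spanning tree paradigm that underlies Theorems \ref{t1}--\ref{KY1}. Suppose $G$ admits no spanning tree whose stem has at most $k$ branch vertices. Choose a spanning tree $T$ of $G$ minimizing $|B(Stem(T))|$, breaking ties by a secondary lexicographic criterion designed to preclude local tree-exchange improvements. By hypothesis $|B(Stem(T))|\ge k+1$, and since any tree with $b$ branch vertices has at least $b+2$ leaves, $Stem(T)$ has at least $k+3$ leaves $\ell_1,\dots,\ell_{k+3}$. Each $\ell_i$ has degree $\ge 2$ in $T$ but degree $1$ in $Stem(T)$, so it is adjacent in $T$ to at least one element of $L(T)$; pick such an $x_i\in L(T)$ for each $i$, forming the candidate set $S=\{x_1,\dots,x_{k+3}\}$.

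The heart of the proof---and the main obstacle---is to show that the vertices of $S$ are pairwise at distance at least $4$ in $G$. This is done by contradiction: from a hypothetical $x_ix_j$-path of length $\le 3$ in $G$, one performs a local rerouting of $T$ that either absorbs a branch vertex of $Stem(T)$ into a degree-$2$ path (strictly decreasing $|B(Stem(T))|$) or preserves $|B(Stem(T))|$ while improving the secondary criterion, contradicting extremality. One enumerates sub-cases by $d_G(x_i,x_j)\in\{1,2,3\}$ and by the position of the intermediate vertices relative to the twigs and the spine of $Stem(T)$. The technique of Gould and Shull from \cite{GS}, already invoked here for the new proof of Theorem \ref{thm1}, offers a clean framework for packaging these exchanges.

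Once $S$ satisfies $d_G(x_i,x_j)\ge 4$ for $i\ne j$, part (a) is immediate: $\alpha^4(G)\ge|S|=k+3$, contradicting $\alpha^4(G)\le k+2$. For part (b), pairwise distance $\ge 4$ forces the closed neighborhoods $N_G[x_1],\dots,N_G[x_{k+3}]$ to be pairwise disjoint, so $\deg_G(S)=|G|-(k+3)-|F|$, where $F:=V(G)\setminus\bigcup_i N_G[x_i]$. The $k+1$ branch vertices of $Stem(T)$ must lie in $F$, since if some $x_i$ were adjacent in $G$ to a branch vertex $b$, then the $T$-proximity of $b$ to other witnesses would yield a short $G$-path between two $x_j$'s violating the distance bound just established. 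Hence $|F|\ge k+1$ and $\deg_G(S)\le|G|-2k-4<|G|-2k-3$, contradicting $\sigma^4_{k+3}(G)\ge|G|-2k-3$ and completing the proof.
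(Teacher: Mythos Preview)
The paper does not actually prove Theorem~\ref{thm-1}; it is quoted from Yan~\cite{Yan} as related background and carries no argument in the manuscript. So there is no in-paper proof to compare against, and what follows evaluates your sketch on its own terms.

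Your extremal-tree strategy is the standard one and is sound in outline, but the sketch contains a genuine error in part~(b). You claim the $k+1$ branch vertices of $Stem(T)$ lie outside $\bigcup_i N_G[x_i]$ because an edge $x_ib\in E(G)$ would, via ``the $T$-proximity of $b$ to other witnesses,'' force a short $G$-path between two $x_j$'s. That reasoning is false: a branch vertex $b$ of $Stem(T)$ need not be close in $T$ or in $G$ to any other $x_j$, so the single edge $x_ib$ produces no violation of the distance-$4$ condition whatsoever. The correct way to exclude $b$ from $N_G[x_i]$ is a direct tree-exchange (add $x_ib$, delete a well-chosen edge on the $T$-path from $x_i$ to $b$, and verify that the resulting tree beats $T$ under your extremality criteria), which in turn requires the secondary optimisation criterion to be stated explicitly---your ``lexicographic criterion designed to preclude local tree-exchange improvements'' is a placeholder, not a definition, and without it none of the exchange arguments can be checked. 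The same vagueness undercuts the central distance-$4$ claim: you rightly flag it as ``the main obstacle'' but do not carry out the case analysis for $d_G(x_i,x_j)\in\{2,3\}$, and the Gould--Shull oblique-neighbour framework you invoke handles adjacency cleanly but does not by itself dispose of distance~$3$. As written, the proposal is an outline in which the two substantive steps are either missing or mis-argued.
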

	
	Recently, Ha, Hanh and Loan gave a sufficient condition for a graph to have a spanning tree whose reducible stem has few leaves. In particular, they proved the following theorem.
	\begin{theorem}{\rm (Ha et al. \cite{HHL})}\label{thm01}
		Let $G$ be a connected graph and let $k\geq 2$ be an integer.  If one of the following conditions holds, then $G$ has a spanning tree whose reducible stem has at most $k$ leaves. 
		\begin{enumerate}
			\item[{\rm (i)}] $\alpha(G) \leq 2k+2,$
			\item[{\rm (ii)}] $\sigma_{k+1}^4(G) \geq \left \lfloor \frac{\vert G \vert - k}{2}\right \rfloor.$
		\end{enumerate}
		Here, the notation $\lfloor r\rfloor$ stands for the biggest integer not exceed the real number $r.$
	\end{theorem}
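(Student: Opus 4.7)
The plan is to argue by contradiction. Suppose $G$ has no spanning tree whose reducible stem has at most $k$ leaves, and choose a spanning tree $T$ of $G$ that minimizes $|L(R\_Stem(T))|$ and, subject to that, is extremal with respect to a secondary objective (for instance, minimizing $|L(T)|$, or minimizing $\sum_{x \in L(T)}|B_x|$) so that exchange arguments can be carried out cleanly. By the contradictory assumption, $R\_Stem(T)$ has at least $k+1$ leaves $u_1, \ldots, u_{k+1}$; each is a branch vertex of $T$ with $\deg_T(u_i) \geq 3$ and $\deg_{R\_Stem(T)}(u_i)=1$, so at least two leaf-branch paths $B_{x_i^1}, B_{x_i^2}$ meet $u_i$, where $x_i^1, x_i^2 \in L(T)$ and $y_{x_i^1} = y_{x_i^2} = u_i$.

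The core of the proof is a family of exchange arguments forcing the witnesses of the $u_i$ to be far apart in $G$. For condition (i), I would aim to show that $S := \{x_i^j : 1 \leq i \leq k+1,\, j=1,2\}$ is independent in $G$: if $x_i^1 x_{i'}^{1} \in E(G)$ with $i \neq i'$, then the swap $T' := T + x_i^1 x_{i'}^1 - e$, with $e$ the edge of $B_{x_i^1}$ incident to $u_i$, removes $u_i$ from the branch-vertex set (at least when $\deg_T(u_i) = 3$), strictly decreasing $|L(R\_Stem)|$ and contradicting the choice of $T$. As $|S|=2k+2$, one further independent vertex is needed to contradict $\alpha(G) \leq 2k+2$; it should come from either a $u_i$ with $\deg_T(u_i) \geq 4$ (providing a third witness $x_i^3$), an interior vertex of some $B_{x_i^j}$ of length $\geq 2$, or a suitably placed vertex elsewhere on $R\_Stem(T)$, with the secondary extremality of $T$ ensuring one of these cases must apply. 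For condition (ii), an analogous exchange, rerouting $T$ through a shortest $u_i$-$u_j$ path in $G$ whenever $d_G(u_i,u_j) \leq 3$, should show that the $u_i$ themselves are pairwise at distance $\geq 4$ in $G$; then $\{u_1,\ldots,u_{k+1}\}$ is admissible for $\sigma_{k+1}^4$, and the distance-$4$ condition confines the neighborhoods of the $u_i$ essentially to their incident leaf-branch paths with controlled overlap, so that a careful count produces an upper bound on $\sum_i \deg_G(u_i)$ violating $\sigma_{k+1}^4(G) \geq \lfloor (|G|-k)/2 \rfloor$.

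The main obstacle will be the exchange arguments themselves. In contrast to the ordinary stem, reducing $|L(R\_Stem)|$ requires not merely deleting a leaf of $T$ but collapsing a branch vertex of $T$ without creating a new branch vertex elsewhere; a naive swap can easily promote a previously degree-$2$ vertex to a new branch vertex, leaving $|L(R\_Stem)|$ unchanged or even increased. The casework will need to distinguish $B_{x_i^j}$ of length $1$ from longer paths, $\deg_T(u_i)=3$ from larger degrees, and different embeddings of the $u_i$ in $R\_Stem(T)$. Producing the $(2k+3)$-rd independent vertex for part (i) is also genuinely nontrivial and must exploit the secondary extremal choice of $T$.
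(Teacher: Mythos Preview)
This paper does not contain a proof of Theorem~\ref{thm01}. The result is quoted from Ha, Hanh and Loan~\cite{HHL} purely as background in the introduction; the proofs actually given in the paper are for Theorem~\ref{thm1} (Section~2) and Theorem~\ref{thm2} (Section~3). There is therefore no proof here to compare your proposal against.

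For what it is worth, your overall strategy---choose a spanning tree extremal for a hierarchy of objectives, attach two leaf witnesses to each leaf of the reducible stem, and run exchange arguments---is in the same spirit as what the paper does for the related claw-free result Theorem~\ref{thm2}. Two points in your sketch would need real work. For part~(i), you correctly note that $2k+2$ independent witnesses are not enough and that a $(2k{+}3)$-rd must be produced; you list several candidate sources but give no mechanism by which the extremal choice of $T$ forces one of them to be available, and this is precisely the crux. For part~(ii), you propose to show that the reducible-stem leaves $u_i$ themselves are pairwise at distance $\ge 4$; be aware that a naive ``reroute along a short $u_i$--$u_j$ path'' swap can create a new branch vertex and hence a new reducible-stem leaf, so the exchange does not automatically decrease $|L(R\_Stem)|$. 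In the paper's proof of Theorem~\ref{thm2} the degree count is organised around the leaf-branch decomposition of $T$ and the witnesses $y_i,z_i$, not around neighbourhood confinement of the $u_i$; you would need to check carefully that the $u_i$ are the right test set for the $\sigma_{k+1}^4$ bound in the general setting of~\cite{HHL}.
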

	After that, Ha, Hanh, Loan and Pham also gave a sufficient condition for a graph to have a spanning tree whose reducible stem has few branch vertices.
	\begin{theorem}{\rm (Ha et al. \cite{HHLP})}\label{thm02}
		Let $G$ be a connected graph and let $k\geq 2$ be an integer. If the following conditions holds, then $G$ has a spanning tree $T$ whose reducible stem has at most $k$ branch vertices. 
		\begin{center} $	\sigma_{k+3}^4(G) \geq \bigg\lfloor \frac{\vert G \vert - 2k-2}{2}\bigg\rfloor.$ \end{center}
	\end{theorem}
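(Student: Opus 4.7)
The plan is to argue by contradiction, selecting an extremal spanning tree $T$ and exhibiting $k+3$ leaves of its reducible stem as a distance-$4$ configuration that breaks the degree-sum hypothesis. Assume that every spanning tree of $G$ has at least $k+1$ branch vertices in its reducible stem. Choose a spanning tree $T$ of $G$ minimizing $|B(R\_Stem(T))|$, and, subject to this, minimizing $|E(R\_Stem(T))|$ (with a further tie-breaker such as the total length of the leaf-branch paths attached to leaves of $R\_Stem(T)$, which will feed the exchange arguments). Set $b:=|B(R\_Stem(T))|\geq k+1$.

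Because $R\_Stem(T)$ is a tree with at least $k+1$ branch vertices, the standard tree inequality $|L(R\_Stem(T))|\geq |B(R\_Stem(T))|+2$ gives at least $k+3$ leaves in $R\_Stem(T)$. Fix any $k+3$ such leaves $v_1,\dots,v_{k+3}$. Each $v_i$ is itself a branch vertex of $T$, and, as a leaf of $R\_Stem(T)$, exactly one of its $\deg_T(v_i)\geq 3$ neighbors in $T$ lies on the path into the remainder of $R\_Stem(T)$, while the other $\deg_T(v_i)-1\geq 2$ neighbors begin distinct leaf-branch paths hanging off $v_i$.

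The heart of the proof is to establish $d_G(v_i,v_j)\geq 4$ for every $i\neq j$. Suppose for contradiction that some pair satisfies $d_G(v_i,v_j)\leq 3$, and let $P$ be a shortest such $G$-path. The plan is a case analysis by the length of $P$ (one, two, or three) and by whether its internal vertices lie on leaf-branch paths of $T$ or inside $R\_Stem(T)$. In each case, adding an appropriate edge of $P$ to $T$ creates a unique cycle, from which we delete an edge incident to $v_i$ or $v_j$ — either on a leaf-branch path or on the segment of $R\_Stem(T)$ between $v_i$ and $v_j$ — and argue that the resulting spanning tree $T'$ satisfies $|B(R\_Stem(T'))|<b$, or has the same number of branch vertices but smaller $|E(R\_Stem(T'))|$, contradicting the choice of $T$. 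I expect this exchange argument to be the main obstacle: one has to verify in each sub-case that the swap truly improves the reducible-stem parameters, which requires careful tracking of which vertices become or cease to be branch vertices of the new tree, and the sub-case in which $P$ threads through the interior of a leaf-branch path is the most delicate.

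Once $d_G(v_i,v_j)\geq 4$ is proved, applying the hypothesis to $S=\{v_1,\dots,v_{k+3}\}$ yields $\sum_{i=1}^{k+3}\deg_G(v_i)\geq \lfloor(|G|-2k-2)/2\rfloor$. The proof concludes by upper-bounding this sum and producing a contradiction. Distance at least $4$ forces the open neighborhoods $N_G(v_i)$ to be pairwise disjoint, disjoint from $S$, and to have no edges between them. Moreover, the at least $2(k+3)$ leaf-branch paths attached to the $v_i$'s contribute leaves of $T$ together with certain interior vertices which, by extremality of $T$, cannot lie in any $N_G(v_r)$. Aggregating these forced non-neighbors and comparing with $|G|$ gives $\sum_i\deg_G(v_i)<\lfloor(|G|-2k-2)/2\rfloor$, the desired contradiction. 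The factor of $1/2$ in the bound is expected to emerge naturally from pairing each $v_i$ with vertices at tree-distance $2$ along its leaf-branch paths, so that roughly half of $V(G)$ is excluded from the total neighborhood.
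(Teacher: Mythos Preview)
The paper does not prove this theorem; it is quoted from \cite{HHLP} as background, so there is no ``paper's own proof'' to compare against. I will therefore assess your outline on its own terms.

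Your plan has a genuine gap at the step where you assert $d_G(v_i,v_j)\geq 4$ for leaves $v_i,v_j$ of $R\_Stem(T)$. Two such leaves can be at $T$-distance $2$: take any branch vertex $b$ of $R\_Stem(T)$ with two leaf-neighbours $v_i,v_j$ in $R\_Stem(T)$ (this certainly occurs, e.g.\ when $R\_Stem(T)$ is a caterpillar with $k+1$ branch vertices and $k+3$ leaves, where the two end branch vertices each carry two pendant leaves). Then the shortest $G$-path $P=v_i\,b\,v_j$ lies entirely inside $T$, so there is no non-tree edge to add, no cycle is created, and your exchange argument never gets off the ground. No extremality condition on $T$ can forbid this, because the obstruction is in $T$ itself, not in $G\setminus T$. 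Since there may be exactly $k+3$ leaves of $R\_Stem(T)$ in total, you cannot simply discard the offending pair.

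The way the related results in the paper (and, by analogy, the proof in \cite{HHLP}) handle this is to work not with the leaves $x_i$ of $R\_Stem(T)$ but with suitably chosen leaves $y_i$ of $T$ lying at the far ends of leaf-branch paths attached to the $x_i$. Those $y_i$ are separated from $R\_Stem(T)$ by the entire leaf-branch path, which gives enough room to run the exchange arguments and to force pairwise $G$-distance at least $4$; it is also what makes the factor $\tfrac12$ in the degree-sum bound appear, since on each leaf-branch path one alternates between potential neighbours of $y_i$ and vertices that must be excluded. Your final counting sketch, which tries to extract the $\tfrac12$ from neighbourhoods of branch vertices $v_i$, is correspondingly too vague: the $v_i$ have neighbours both on leaf-branch paths and inside $R\_Stem(T)$, and you have not explained how to pair off the latter. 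Reworking the argument with the $y_i$'s as your distance-$4$ set, in the style of Claims~\ref{claim2.4}--\ref{claim2.8} of the present paper, is the right fix.
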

	Very recently, Hanh stated the following theorem.
	\begin{theorem}{\rm (Hanh \cite{Hanh})}\label{thm03}
		Let $G$ be a connected claw-free graph and let $k \geq 2$ be an integer. If one of the following conditions holds, then $G$ has a spanning tree whose reducible stem has at most $k$ leaves. 
		\begin{enumerate}
			\item[{\rm (i)}] $\alpha(G) \leq 3k+2,$
			\item[{\rm (ii)}] $\sigma_{k+1}^6(G) \geq \left \lfloor \frac{\vert G \vert -4k-2}{2}\right \rfloor.$
		\end{enumerate}
	\end{theorem}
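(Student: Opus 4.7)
The plan is to adapt the Gould--Shull technique, which serves as the basis of the new proof of Theorem~\ref{thm1} earlier in the paper, to the reducible-stem setting, proving both parts by contradiction. Assume $G$ has no spanning tree whose reducible stem has at most $k$ leaves, and among all spanning trees of $G$ choose one $T$ minimizing $|L(R\_Stem(T))|$; subject to this, maximize the total length $\sum_{x\in L(T)}|B_x|$ of the leaf-branch paths. By assumption $|L(R\_Stem(T))|\geq k+1$, so I fix $k+1$ distinct leaves $y_{1},\ldots,y_{k+1}$ of $R\_Stem(T)$. Each $y_i$ is a branch vertex of $T$ with at least two attached leaf-branch paths.

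At each $y_i$, I would distinguish cases according to the lengths of the attached leaf-branch paths $B_{x_{i,1}},B_{x_{i,2}},\ldots$. The extremal choice of $T$ rules out certain chords of $G$: any such chord would allow an edge swap that either decreases $|L(R\_Stem(T))|$ or increases the total leaf-branch-path length, contradicting the choice of $T$. Combined with the claw-free condition at $y_i$ (which forbids three pairwise non-adjacent neighbors), this analysis would yield, for part (i), a set $U_i\subset V(G)$ of exactly three vertices associated with $y_i$ such that $U_i$ is independent in $G$ and no vertex of $U_i$ is adjacent to any vertex of $U_j$ for $j\neq i$; and for part (ii), a single representative $z_i$ such that the $z_i$'s have pairwise distance at least $6$ in $G$.

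The conclusion of each part then follows. In part (i), $U=\bigcup_{i=1}^{k+1}U_i$ is independent in $G$ with $|U|=3(k+1)=3k+3>3k+2\geq\alpha(G)$, a contradiction. In part (ii), counting the vertices in the pairwise-disjoint closed second-neighborhoods of the $z_i$'s and exploiting the structural constraints obtained above gives an upper bound on $\sum_i\deg_G(z_i)$ in which roughly $4(k+1)-2=4k+2$ vertices lying on the leaf-branch paths incident to the $y_i$'s are excluded; this upper bound will violate $\sigma^{6}_{k+1}(G)\geq\lfloor(|G|-4k-2)/2\rfloor$, yielding the desired contradiction.

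The hardest step will be verifying the independence (resp.\ distance) conditions \emph{across} different leaves $y_i$ and $y_j$ of $R\_Stem(T)$, especially when they lie close to one another in $T$: any chord connecting $U_i$ and $U_j$ must be ruled out by a tree-modification argument, and the claw-free hypothesis is essential for pinning down the required local structure at each $y_i$. The case in which an attached leaf-branch path at some $y_i$ has length $1$ (so the leaf of $T$ is itself a neighbor of $y_i$) is particularly delicate and requires careful use of claw-freeness to produce the required set $U_i$ or representative $z_i$ without allowing a forbidden exchange.
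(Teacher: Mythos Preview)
The paper does \emph{not} contain a proof of this theorem: Theorem~\ref{thm03} is simply quoted as a result of Hanh~\cite{Hanh} and is used only as motivation for the paper's own Theorem~\ref{thm2}. Consequently there is no proof in the paper against which your proposal can be compared.

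That said, part~(i) of Theorem~\ref{thm03} is in fact obtainable from the machinery the paper develops for Theorem~\ref{thm2}. Under the contradiction hypothesis the paper selects a spanning tree $T$ subject to the hierarchy of optimality conditions (C0)--(C4) (not merely the two conditions you propose), and Claim~\ref{claim2.5.1} exhibits an independent set $U=\{x_i,y_i,z_i\}_{i=1}^{l}$ of size $3l\ge 3k+3$, which already contradicts $\alpha(G)\le 3k+2$. So your plan for~(i) is headed in the right direction, but note that the paper needs the finer tie-breaking conditions (C1)--(C4) to push through the cross-independence claims (Claims~\ref{claim2.6}, \ref{claim2.3}, \ref{claim2.5}, \ref{claim2.5.1}); your two-level optimisation (minimise $|L(R\_Stem(T))|$, then maximise $\sum_x|B_x|$) is likely not enough on its own to rule out all the problematic edges, in particular the edges between a leaf $x_i$ of $R\_Stem(T)$ and the associated $y_i,z_i$ handled in Claim~\ref{claim2.5.1}.

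For part~(ii) your proposal remains a sketch: you assert that suitable representatives $z_i$ at pairwise distance $\ge 6$ can be found and that a second-neighbourhood count yields the bound $\lfloor(|G|-4k-2)/2\rfloor$, but none of the actual work---showing $d_G(z_i,z_j)\ge 6$, or carrying out the degree-sum estimate---is done, and nothing in the present paper supplies it. As written this is a plausible outline rather than a proof, and since the paper gives no argument for~(ii) either, there is nothing further to compare.
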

	The open question is whether we may give a sharp condition of $\sigma_{3k+3}(G)$ to show that $G$ has a spanning tree whose reducible stem has at most $k$ leaves.
	
	For the last purpose of this paper, we will give an affirmative answer to this question. In particular, we prove the following theorem.
	\begin{theorem}\label{thm2}
		Let $G$ be a connected claw-free graph and let $k$ be an integer ($k\geq 2$).  If  $\sigma_{3k+3}(G) \geq \left\vert G \right\vert-k$, then $G$ has a spanning tree whose reducible stem has at most $k$ leaves.
	\end{theorem}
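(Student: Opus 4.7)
The plan is to argue by contradiction. Assume every spanning tree of $G$ has reducible stem with at least $k+1$ leaves, and choose a spanning tree $T$ of $G$ subject to the lexicographic extremal conditions: (O1) $|L(R\_Stem(T))|$ is minimum; (O2) subject to (O1), $|L(T)|$ is minimum; and (O3) subject to (O1) and (O2), $\sum_{x\in L(T)} |B_x|$ is maximum. The purpose of (O3) is to push the leaves of $T$ as far as possible from the branch vertices of $T$, which is needed to ensure that selected $T$-leaves and their associated branch vertices form an independent set of $G$.

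Since $|L(R\_Stem(T))|\geq k+1$, fix $k+1$ leaves $v_1,\ldots,v_{k+1}$ of $R\_Stem(T)$. Each $v_i$ is a branch vertex of $T$ with $\deg_{R\_Stem(T)}(v_i)=1$, so at least two leaf-branch paths of $T$ are incident to it. For each $i$ pick two leaves $x_i,x_i'\in L(T)$ whose leaf-branch paths terminate at $v_i$, and set
\[
S=\{v_i,x_i,x_i' : 1\leq i\leq k+1\},\qquad |S|=3(k+1)=3k+3.
\]
The proof will hinge on two claims: (A) the pairwise $G$-distance among vertices of $S$ is at least $2$, so $S$ is admissible for $\sigma_{3k+3}(G)$; and (B) every vertex $z\in V(G)\setminus S$ has at most one neighbor in $S$. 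Granting (A) and (B), since $S$ is itself independent,
\[
\sigma_{3k+3}(G)\;\leq\;\sum_{u\in S}\deg_G(u)\;=\;\sum_{z\in V(G)}|N_G(z)\cap S|\;\leq\;|V(G)\setminus S|\;=\;|G|-3k-3\;<\;|G|-k,
\]
contradicting the hypothesis.

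I would prove (A) and (B) by standard tree-exchange arguments combined with claw-freeness, adapted from the new short proof of Theorem~\ref{thm1} developed earlier in the paper. For (A), any edge $uv\in E(G)$ with $u,v\in S$ is ruled out by exhibiting a swap $T'=T-e+uv$ that strictly improves one of (O1)--(O3); claw-freeness is applied locally at each branch vertex $v_i$, where the three chosen tree-neighbors of $v_i$ cannot be pairwise nonadjacent in $G$, which both helps to construct the swap and constrains how short the leaf-branch paths can be. For (B), if $z\in V(G)\setminus S$ had two distinct $S$-neighbors, the local structure of $T$ near $z$ combined with claw-freeness again produces an edge whose swap improves one of (O1)--(O3); the argument subdivides according to whether the two neighbors are both $T$-leaves, both $R\_Stem(T)$-leaves, or one of each, and whether they sit at the same $v_i$ or across distinct $v_i$'s.

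The main obstacle is the sub-case in which some leaf-branch path incident to $v_i$ has length $1$, i.e., $x_i$ or $x_i'$ is adjacent to $v_i$ in $T$; in that case the corresponding vertices of $S$ are at $G$-distance $1$, violating (A) directly. Eliminating this configuration requires a preliminary exchange: the three tree-neighbors $\{w_i,x_i,x_i'\}$ of $v_i$ (where $w_i$ is the $R\_Stem(T)$-neighbor of $v_i$) cannot be pairwise nonadjacent in $G$ by claw-freeness, and the resulting $G$-edge should allow a swap that either reduces $|L(R\_Stem(T))|$ (contradicting (O1)) or reduces $|L(T)|$ (contradicting (O2)). Handling this reduction, and then repeating similar reasoning across the $k+1$ leaves of $R\_Stem(T)$ in a uniform way, is where the Gould--Shull technique from the reproof of Theorem~\ref{thm1} must be invoked or generalized, and it constitutes the technical heart of the argument.
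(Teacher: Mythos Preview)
Your claim (B) --- that every vertex $z\notin S$ has at most one neighbour in $S$ --- is not merely hard to prove; it is false, and this is fatal to the approach. If (B) held, your computation would yield $\sigma_{3k+3}(G)\le |G|-3k-3$, which would prove the theorem under the much weaker hypothesis $\sigma_{3k+3}(G)\ge |G|-3k-2$. But the sharpness construction in the paper (Figure~\ref{Pic1}) satisfies $\sigma_{3k+3}(G)=|G|-k-1$ and has no spanning tree of the required type; your bound would force $|G|-k-1\le |G|-3k-3$, i.e.\ $2k+2\le 0$. Concretely, in that example the vertex $x_{iy}$ (the $T$-neighbour of $v_i$ on the leaf-branch path toward your chosen leaf in $R_i$) is adjacent in $G$ both to $v_i$ and to every vertex of $R_i$, hence to two members of $S$. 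No tree-exchange will remove this configuration, since the example is genuinely extremal for the theorem as stated.

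The paper's proof does not attempt anything like (B). Instead of bounding $|N_G(z)\cap S|$ pointwise, it bounds $\sum_{u\in U}|N_G(u)\cap V(B_p)|\le |B_p|$ for each leaf-branch path $B_p$ via a shifting argument (e.g.\ if $x\in N_G(y_i)\cap V(B_{y_i})$ then $x^{-}\notin N_G(\{z_i,x_i\})$, so $N_G(y_i)\cap V(B_{y_i})$ and $\bigl(N_G(\{z_i,x_i\})\cap V(B_{y_i})\bigr)^{+}$ are disjoint inside $P_T[y_i,x_i]$), and on $R\_Stem(T)$ it reapplies the oblique-neighbour count from the new proof of Theorem~\ref{thm1} to obtain $|N_G(L(R\_Stem(T)))\cap V(R\_Stem(T))|\le |R\_Stem(T)|-l$. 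Summing gives $\deg_G(U)\le |G|-l\le |G|-k-1$, which is exactly tight. To make these exchanges go through the paper also needs two extremal conditions absent from your (O1)--(O3): minimising $|R\_Stem(T)|$ before $|L(T)|$, and then minimising $\sum_i\deg_T(v_i)$. The first is what guarantees one can choose the two special leaves $y_i,z_i$ with $N_G(y_i)\cap\bigl(V(R\_Stem(T))\setminus\{v_i\}\bigr)=\emptyset$ (your arbitrary choice of $x_i,x_i'$ does not have this property), and the second is used repeatedly in the claw-free case analyses establishing independence of $U$ and the shifting subclaims.
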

\begin{figure}[h]
	\centering
	\includegraphics[width=0.7\textwidth]{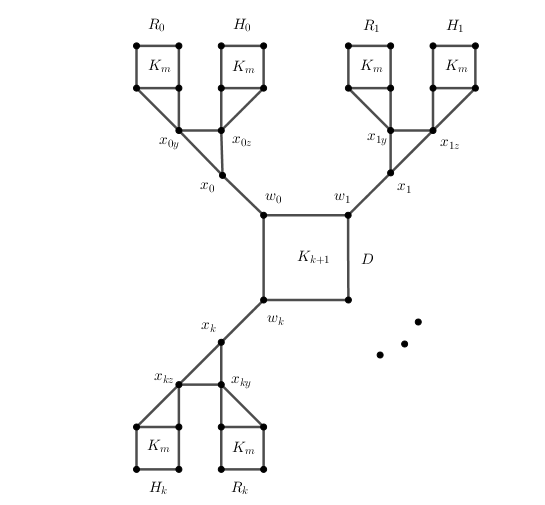}
	\caption[Graph G]{Graph G}
	\label{Pic1}
\end{figure}
	To show that our result is sharp, we will give the following example. Let $k \geq2$ and $m\geq 1$ be integers, and let $R_0,R_1,...,R_{k}$ and $H_0,H_1,...,H_{k}$ be $2k+2$ disjoint copies of the complete graph $K_m$ of order $m$. Let $D$ be a complete graph with $V(D)=\{w_i\}_{i=0}^{k}.$ Let $\{x_i, x_{iy}, x_{iz}\}_{i=0}^{k}$ be $3k+3$ vertices not contained in $\cup_{i=0}^k\bigg(V(R_i)\cup V(H_i)\cup \{w_i\}\bigg)$. Join $x_{iy}$ to all the vertices of $V(R_i)$ and $x_{iz}$ to all the vertices of $V(H_i)$ for every $0 \leq i \leq k$. Adding $3k+3$ edges $x_{i}x_{iy}, x_{i}x_{iz}, x_{iy}x_{iz}$ and joining $x_i$ to $w_i$ for every $0 \leq i \leq k$. Let $G$ denote the resulting graph (see Figure 1). Then, we have $|G|=(k+1)(2m+4).$ Moreover, take a vertex $u_i \in V(R_i)$ and a vertex $v_i\in V(H_i)$ for for every $0 \leq i \leq k.$ We obtain 
	\begin{eqnarray*}
		\sigma_{3k+3}(G)&=&\sum\limits_{i=0}^{k}\bigg(\deg_G(u_i)+\deg_G(v_i)+\deg_G(x_i)\bigg)\\  
		& =& (k+1)m +(k+1)m+3(k+1) = (k+1)(2m+3)\\
		&=& |G|-k-1.
	\end{eqnarray*}
	
	But $G$ has no spanning tree whose reducible stem has $k$ leaves. Hence the condition of Theorem \ref{thm2} is sharp.
	
	\section{A new proof of Theorem \ref{thm1}}
	Before begining to prove Theorem \ref{thm1} we recall some definitions in \cite{GS}.
	\begin{definition}[{\cite{GS}}]
		Let $T$ be a tree. For each $e\in E(T)$ and $u,v \in V(T)$, we denote $\{u_v\}=V(P_T[u,v])\cap N_T(u)$ and $e_v$ as the vertex incident to $e$ which is the nearest vertex of $v$ in $T$. 
	\end{definition}
	
	\begin{definition}[{\cite{GS}}]
		Let $T$ be a spanning tree of a graph $G$ and let $v\in V(G)$ and $e\in E(T)$. Denote $g(e,  v)$ as the vertex incident to $e$ farthest away from $v$ in $T$. We say $v$ is an {\it oblique neighbor} of $e$ { with respect to $T$} if $vg(e,  v)\in E(G)$.
	\end{definition}
	
	\begin{definition}[{\cite{GS}}]
		Let $T$ be a spanning tree of a graph $G$. Two vertices are {\it pseudoadjacent} with respect to $T$ if there is some $e\in E(T)$ which has them both as oblique neighbors. Similarly, a vertex set is {\it pseudoindependent} with respect to $T$ if no two vertices in the set are pseudoadjacent with respect to $T$. 
	\end{definition}
	We note here that pseudoadjacency (with respect to any tree) is a weaker condition than adjacency, while pseudoindependence is a stronger condition than independence.
	
	Now we are ready to prove Theorem \ref{thm1}.\\
	{\bf Proof of Theorem \ref{thm1}.} Suppose that $G$ has no spanning tree with at most $l$ leaves. Choose some spanning tree $T$ of $G$ such that:\\
	(T1) $|L(T)|$ is as small as possible.
	
	By the assumption, $T$ must have at least $l+1$ leaves.
	
	We have the following claims.
	
	\begin{claim}\label{claim2} 
		$L(T)$ is pseudoindependent with respect to  $T$. In particular, $L(T)$ is independent.
	\end{claim}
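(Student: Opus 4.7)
The plan is a contradiction argument against (T1). Suppose distinct leaves $x, y \in L(T)$ are pseudoadjacent via some edge $e = uv \in E(T)$; deleting $e$ splits $T$ into components $T_u \ni u$ and $T_v \ni v$. The oblique-neighbor definition supplies shortcut edges of $G$: concretely, $xv \in E(G)$ if $x \in V(T_u)$ and $xu \in E(G)$ if $x \in V(T_v)$, and analogously for $y$. The goal will be to exploit these shortcuts to construct a spanning tree of $G$ with strictly fewer leaves than $T$, contradicting (T1).

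The basic tool is the edge swap $T' := T - uv + xv$ (when $x \in V(T_u)$): in $T'$ the leaf $x$ gains a neighbor and ceases to be a leaf, while $u$ becomes a new leaf precisely when $\deg_T(u) = 2$. Hence $|L(T')| = |L(T)| - 1$ whenever $\deg_T(u) \geq 3$. Splitting into sub-cases by the position of $y$ -- (i) $x, y \in V(T_u)$, or (ii) $x \in V(T_u), y \in V(T_v)$ -- and invoking the symmetry between $u$ and $v$, the single-edge swap finishes the argument whenever one of $\deg_T(u), \deg_T(v)$ is at least $3$. What remains is the degenerate configuration $\deg_T(u) = 2$ (and, in sub-case (ii), also $\deg_T(v) = 2$).

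For this degenerate configuration I will employ a compound swap. In sub-case (ii), letting $u'$ be the unique $T$-neighbor of $u$ other than $v$, set $T^{\star} := T - \{uv, uu'\} + \{xv, yu\}$. A direct connectivity check confirms $T^{\star}$ is a spanning tree, and the degree count gives $|L(T^{\star})| = |L(T)| - 1$ unless $\deg_T(u') = 2$, in which case $|L(T^{\star})| = |L(T)|$. When $\deg_T(u') \geq 3$ we obtain the contradiction. When $\deg_T(u') = 2$, I will invoke the claw-free hypothesis of Theorem~\ref{thm1}: walking along the maximal degree-$2$ pendant path emanating from $u$ into $T_u$ until the first branch vertex $z$, claw-freeness forbids the $T$-neighbors of $z$ from being pairwise non-adjacent in $G$, and the resulting extra edge re-routes $T$ into a spanning tree $T''$ of equal leaf count for which the analogous compound swap strictly drops the leaf count. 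Sub-case~(i) will be treated by the parallel triple-edge rearrangement $T - \{uv, e'\} + \{xv, yv\}$, where $e'$ is a carefully chosen edge on the $T_u$-path joining $x$ to $y$, again using claw-freeness to unlock the degenerate case.

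The ``in particular, $L(T)$ is independent'' assertion is then immediate: if two distinct leaves $x, y \in L(T)$ satisfied $xy \in E(G)$, taking $e = x x_1$ with $x_1$ the unique $T$-neighbor of $x$ gives $g(e, x) = x_1$ and $g(e, y) = x$, whence $x g(e, x) = x x_1 \in E(T) \subseteq E(G)$ and $y g(e, y) = yx \in E(G)$, exhibiting pseudoadjacency of $x, y$ via $e$ and contradicting the first conclusion. The main obstacle I expect is the degenerate compound-swap step: when the pendant path from $u$ into $T_u$ consists entirely of degree-$2$ vertices of $T$, the naive edge exchanges only shuffle leaves around, and the leaf-reducing swap must instead be located by carefully threading claw-freeness through the local structure of $T$ around the nearest branch vertex.
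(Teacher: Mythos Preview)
Your overall contradiction strategy matches the paper's, but the execution diverges at exactly the point you flag as the ``main obstacle,'' and the degenerate-case handling you sketch does not close.

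Concretely: in sub-case~(ii) with $\deg_T(u)=\deg_T(v)=2$, your compound swap $T^\star = T - \{uv,uu'\} + \{xv,yu\}$ and the claimed leaf count ``$|L(T^\star)|=|L(T)|-1$ unless $\deg_T(u')=2$'' is false when $u'=x$ (i.e.\ $T_u=\{x,u\}$): then $x$ stays a leaf, $u$ becomes a leaf, $y$ stops being a leaf, and $|L(T^\star)|=|L(T)|$ even though $\deg_T(u')=1$. Your fallback --- walking from $u$ into $T_u$ to the first branch vertex $z$ --- then fails outright, because $T_u$ contains no branch vertex at all. The same structural problem recurs if you try to walk on the $v$-side instead and hit a short pendant path there. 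In sub-case~(i) your triple-edge rearrangement $T-\{uv,e'\}+\{xv,yv\}$ is left unspecified, and no choice of $e'$ is indicated that avoids creating two new leaves at the endpoints of $e'$ when they both have $T$-degree~$2$.

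The paper avoids all of this with one idea you are circling but never land on: \emph{never cut at $e$; cut at the edge $b\,b_t$ where $b$ is the nearest branch vertex to the leaf $t$}. Since $\deg_T(b)\ge 3$, removing $b\,b_t$ cannot create a leaf at $b$; at worst $b_t$ becomes a leaf, but both $s$ and $t$ stop being leaves, so $|L(T')|$ drops. In your sub-case~(ii) this gives, with $b$ the nearest branch to $t$ and $e\neq b\,b_t$, the tree $T'=T-\{e,\,b\,b_t\}+\{s\,e_t,\,t\,e_s\}$ in one stroke. In your sub-case~(i), the paper first applies claw-freeness at the \emph{far} endpoint $a:=g(e,s)=g(e,t)$: since $a$ is $G$-adjacent to the near endpoint $z$ and to both $s,t$, one of $st,\,zs,\,zt$ lies in $E(G)$, and in each case a single swap cutting at the nearest-branch edge works. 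Your proposed use of claw-freeness at a branch vertex reached by a degree-$2$ walk is a different (and, as above, sometimes empty) manoeuvre; the paper's claw is anchored at $a$ itself.

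Your final paragraph (adjacency $\Rightarrow$ pseudoadjacency) is fine.
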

	\begin{proof}
		Suppose two leaves $s$ and $t$ are pseudoadjacent with respect to $T$. Then there is some edge $e\in E(T)$ such that  $sg(e,s), tg(e,t)\in E(G)$. Let $b$ and $u$ be the nearest branch vertices of  $s$ and $t$, respectively. Consider the following two cases.
		
		Case 1: Suppose $g(e,s)\not= g(e,t)$. Then $e_s=g(e,t)$ and $e_t=g(e,s)$, so $se_t, te_s\in E(G)$. We consider the spanning tree
		$$T':=\left\{\begin{array}{ll}T - e + se_t,
		& \;\mbox{ if } e=uu_t,\\
		T-\{e, uu_t\}+\{se_t, te_s\}, & \;\mbox{ if } e \not= uu_t.
		\end{array}\right.$$
		Hence, $|L(T')| < |L(T)|.$ This violates (T1). So case 1 does not happen.
		
		Case 2: Suppose $g(e,s)=g(e,t)$. Define $a:=g(e,s)=g(e,t)$. Then $e_s=e_t\not\in \{s, t\}$ and denoted by vertex $z$. Since $G[a, z, s, t]$ is not $K_{1,3}$-free , so we have either $st\in E(G)$ or $zt\in E(G)$ or $zs\in E(G)$. Consider the tree
		$$T':=\left\{\begin{array}{ll}T - uu_t + st,
		& \;\mbox{ if } st\in E(G),\\
		T-\{e, uu_t\}+\{zt, sa\}, & \;\mbox{ if } zt\in E(G),\\
		T-\{e, bb_s\}+\{zs, ta\}, & \;\mbox{ if } zs\in E(G).
		\end{array}\right.$$
		
		Hence, $|L(T')| < |L(T)|.$ This violates the condition (T1). So case 2 does not happen.\\
		Therefore,  the claim \ref{claim2} is proved.
	\end{proof}
	
	\begin{claim}\label{claim3}  
		For each branch vertex $ b \in B(T) $, there are at least $\deg_T(b)- 1$ edges of $T$ incident with $b$  such that they have no oblique neighbor in $ L(T) $.
	\end{claim}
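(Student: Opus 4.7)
The plan is to argue by contradiction, assuming some branch vertex $b$ has two distinct incident tree-edges $e_1 = bb_1$ and $e_2 = bb_2$ each admitting an oblique neighbor in $L(T)$, and producing a spanning tree with fewer leaves than $T$, in violation of (T1). Let $T_1, \ldots, T_d$ denote the components of $T - b$, where $d = \deg_T(b) \geq 3$ and $b_i \in T_i$. For any oblique leaf neighbor $s$ of $e_i = bb_i$ I distinguish two cases: the \emph{inner} case $s \in T_i$, which forces $g(e_i,s)=b$ and hence $sb \in E(G)$, and the \emph{outer} case $s \notin T_i$, which forces $g(e_i,s)=b_i$ and hence $sb_i \in E(G)$.

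First I will rule out outer oblique leaf neighbors entirely. If $e_i$ has an outer leaf oblique neighbor $s$, form $T' := T - e_i + s b_i$. Since $s$ and $b_i$ lie in the two components of $T - e_i$, this is a spanning tree. The swap is designed so that $\deg_{T'}(b_i) = \deg_T(b_i)$ (one edge lost at $b$, one gained at $s$), while $s$ loses leaf status (degree becomes $2$) and $b$ retains degree $\deg_T(b) - 1 \geq 2$. All other degrees are unchanged, so $|L(T')| = |L(T)| - 1$, contradicting (T1). Hence every oblique leaf neighbor of any edge incident to $b$ must be of inner type.

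Next, assuming only inner oblique leaf neighbors are possible, I will show no two edges at $b$ can simultaneously possess one. Suppose $s_1 \in T_1$ and $s_2 \in T_2$ are inner leaf neighbors of $e_1$ and $e_2$ respectively, so that $s_1b, s_2b \in E(G)$. Choose any third tree-neighbor $b_3$ of $b$, available since $b$ is a branch vertex. Then $b$ is $G$-adjacent to each of $s_1, s_2, b_3$, so by claw-freeness the set $\{s_1, s_2, b_3\}$ cannot be independent in $G$; Claim \ref{claim2} forbids $s_1s_2 \in E(G)$, so without loss of generality $s_1 b_3 \in E(G)$. But $s_1 \in T_1$ and $b_3 \in T_3$ lie in distinct subtrees of $T - b$, which exhibits $s_1$ as an \emph{outer} oblique leaf neighbor of $e_3 = bb_3$, contradicting the first step.

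Combining the two reductions, at most one edge incident to $b$ can have an oblique neighbor in $L(T)$, giving the claim. The only delicate point is the outer swap: one must ensure $b_i$ does not itself drop into the leaf set. This is automatic because the same swap that removes $bb_i$ immediately attaches $sb_i$, leaving $\deg(b_i)$ invariant, and this is precisely why the argument works with only the minimality condition (T1) and without any secondary optimization.
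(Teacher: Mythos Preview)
Your proof is correct, and it takes a genuinely different route from the paper's. The paper first applies claw-freeness to the \emph{tree-neighbors} of $b$: it shows that among $N_T(b)=\{s_1,\dots,s_q\}$ at most one vertex $s$ can be $G$-isolated from the others, so every other $s_t$ comes equipped with an auxiliary edge $s_ts_i\in E(G)$; it then uses this edge $s_ts_i$ to build the leaf-reducing swap in both the inner and outer cases for $e=bs_t$. You instead dispose of the outer case uniformly for every edge at $b$ with the single swap $T-bb_i+sb_i$, and only then invoke claw-freeness, applied not to three tree-neighbors but to the configuration $\{b,s_1,s_2,b_3\}$ with the two hypothetical inner leaves; the resulting edge $s_1b_3$ (or $s_2b_3$) is then recognized as an outer oblique neighbor of $e_3$, feeding back into the first step. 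Your organization is arguably cleaner---one case analysis, one appeal to claw-freeness, and an explicit reliance on Claim~\ref{claim2} to kill $s_1s_2$---while the paper's version has the minor advantage of isolating a reusable structural fact about $G[N_T(b)]$, though it is not used again.
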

	\begin{proof}
		Set $ N_{T}(b)=\{s_1, s_2, ..., s_q\}, q \geq 3.$\\
		Assume that there exist two vertices $s_i, s_j \in N_{T}(b)$ such that $s_is_t\not\in E(G) $ for all $ t\in \{1,...,q\}\setminus\{i\}$ and $s_js_t\not\in E(G) $ for all $ t\in \{1,...,q\}\setminus\{j\}.$ Then $G[b,s_i,s_j,s_t]$ is  $K_{1,3}$-free for every $t\in \{1,...,q\}\setminus \{i, j\}$. This is a contradiction. Therefore we conclude that there exists at most one vertex $s\in N_{T}(b)$ such that $ss_t\not\in E(G)$ for all $s_t\not= s.$ 
		
		Let $s_t\in N_{T}(b)\setminus \{s\}.$ Then there exists some vertex $s_i\in N_{T}(b)\setminus \{s_t, s\}$ such that $s_ts_i\in E(G).$ Set $e:=bs_t.$ To complete Claim \ref{claim3}, we will need only to prove that $x$ is not an oblique neighbor of $e$ with respect to $T$ for every $x\in L(T).$ Indeed, to the contrary, assume that there exists some vertex $x\in L(T)$ such that $x$ is an oblique neighbor of $e$ with respect to $T.$  Consider the tree
		$$T':=\left\{\begin{array}{ll}T - \{e,bs_i \} + \{bx,s_ts_i\},
		& \;\mbox{ if } g(e,x)=b, x \not= s_t,\\
		T - bs_i + s_ts_i,
		& \;\mbox{ if } g(e,x)=b, x =s_t,\\
		T - e + xs_t, & \;\mbox{ if } g(e,x)=s_t.
		\end{array}\right.$$
		Hence, $|L(T')| < |L(T)|.$ This violates with (T1). \\
		Claim \ref{claim3} holds.
	\end{proof}
	\begin{claim}\label{claim4} There are at least $l$ distinct edges of $T$ such that they have no oblique neighbor in $L(T).$
	\end{claim}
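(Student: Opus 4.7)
The plan is to prove Claim~\ref{claim4} by a direct counting argument, combining Claim~\ref{claim3} with the handshake identity for the tree $T$. The starting observation I would make is that every leaf edge automatically lies in $E' := E(T) \setminus E^*$: for a leaf $\ell$ with unique $T$-edge $e_\ell$, the vertex $g(e_\ell, \ell)$ is the other endpoint of $e_\ell$, which is joined to $\ell$ in $G$ by $e_\ell$ itself, so $\ell$ is an oblique neighbor of $e_\ell$. Hence no edge of $E^*$ is incident to any leaf. Writing $E_{BB}$ (resp.\ $E_{BP}$) for the set of $T$-edges having two (resp.\ exactly one) endpoints in $B(T)$, with the other endpoint being a degree-$2$ vertex of $T$ in the latter case, every edge of $E^*$ incident to a branch vertex lies in $E_{BB} \cup E_{BP}$.

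Next, I would apply Claim~\ref{claim3} at every branch vertex and sum: since each $b \in B(T)$ has at least $\deg_T(b) - 1$ incident edges in $E^*$, counting incidences (each $E^*$-edge in $E_{BB}$ contributes twice, each in $E_{BP}$ contributes once) gives
\[
\sum_{b \in B(T)} (\deg_T(b) - 1) \;\leq\; 2\,|E^* \cap E_{BB}| + |E^* \cap E_{BP}|.
\]
By the handshake identity $\sum_{v \in V(T)} \deg_T(v) = 2(|V(T)| - 1)$ together with the contributions $1$ from each leaf and $2$ from each degree-$2$ vertex, a short calculation yields $\sum_{b \in B(T)} \deg_T(b) = |L(T)| + 2|B(T)| - 2$, so the left-hand side of the displayed inequality equals $|L(T)| + |B(T)| - 2$.

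Finally, the subgraph $(B(T), E_{BB})$ of $T$ is a forest on $|B(T)|$ vertices, so $|E^* \cap E_{BB}| \leq |E_{BB}| \leq |B(T)| - 1$. Subtracting $|E^* \cap E_{BB}|$ from both sides of the displayed inequality and invoking the forest bound,
\[
|E^*| \;\geq\; |E^* \cap E_{BB}| + |E^* \cap E_{BP}| \;\geq\; \bigl(|L(T)| + |B(T)| - 2\bigr) - \bigl(|B(T)| - 1\bigr) \;=\; |L(T)| - 1 \;\geq\; l,
\]
where the last inequality uses $|L(T)| \geq l+1$. The only delicate point is the clean separation of the $E_{BB}$ and $E_{BP}$ contributions so that the forest bound can be applied; everything else is routine bookkeeping from Claim~\ref{claim3} and the tree identities, and neither the claw-free hypothesis nor the pseudo-independence from Claim~\ref{claim2} is needed at this stage.
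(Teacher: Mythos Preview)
Your proof is correct and follows essentially the same route as the paper: sum Claim~\ref{claim3} over all branch vertices, correct for the at most $|B(T)|-1$ double-counted edges in $E_{BB}$ via the forest bound, and combine with the tree identity $\sum_{b\in B(T)}(\deg_T(b)-2)=|L(T)|-2$ to get $|E^*|\geq |L(T)|-1\geq l$. The only cosmetic differences are that you compute $\sum_b\deg_T(b)$ via handshaking and separate $E_{BB}$ from $E_{BP}$ explicitly, whereas the paper subtracts the overcount directly; and you correctly point out that Claim~\ref{claim2} is not needed for this step, whereas the paper opens its proof of Claim~\ref{claim4} by invoking it (without actually using it in the count---it is only needed afterwards, in the final degree-sum inequality).
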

	\begin{proof}
		By Claim \ref{claim2}, we obtain that for each $e\in E(T),$ $e$ has at most an oblique neighbor in $L(T).$ Moreover, if an edge $e$ is incident with two branch vertices of $T$ then $e$ has to be an edge of the subgraph $T[\{b\}_{b\in B(T)}]$ of $T.$ Then, there are at most $|B(T)|-1$ edges which are adjacent with two branch vertices of $T.$ Hence, combining with Claim \ref{claim3}, there exist at least $$\sum\limits_{b\in B(T)}(\deg_T(b)-1)-|B(T)|+1=\sum\limits_{b\in B(T)}(\deg_T(b)-2)+1$$ distinct edges in $E(T)$ which have no oblique neighbor in $L(T)$.\\
		On the other hand, we have
		\begin{eqnarray*}
			|L(T)|&=&2+\sum\limits_{b\in B(T)}(\deg_T(b)-2)\\
			&\Rightarrow &\sum\limits_{b\in B(T)}(\deg_T(b)-2)+1 = |L(T)|-1\geq l.
		\end{eqnarray*}
		Therefore, the claim is proved.
	\end{proof}
	
	For any $v, x\in V(T)$, we now have $vx\in E(G)$  if and only if  $v$ is an oblique neighbor of $xx_v$ with respect $T$.  Therefore, the number of edges of $T$ with $v$ as an oblique neighbor equals the degree of $v$ in $G$. Combining with Claims \ref {claim2} and \ref {claim4}, we obtain that
	\[\sigma_{l+1}(G)\leq |E(T)|-l=|V(T)|-1-l=|G|-1-l,\]
	which contradicts the assumption of Theorem \ref {thm1}. The proof of Theorem \ref {thm1} is completed. \qed
	
	\section{Proof of theorem \ref{thm2}}

	For two distinct vertices $u,v$ of $T$, let $P_T[u,v]$ denote the unique path in $T$ connecting $u$ and $v.$ We define the \emph{orientation} of $P_T[u,v]$ is from $u$ to $v$. For each vertex $x \in V(P_T[u,v])$, we denote by $x^+$ and $x^-$ the successor and predecessor of $x$ in $P_T[u,v]$, respectively, if they exist. For any $X\subseteq V(G),$ set $(N(X)\cap P_T[u,v])^{-} = \{x^{-} \vert x \in V(P_T[u,v])\setminus \{u\}$ and $x \in N(X) \}$ and  $(N(X)\cap P_T[u,v])^{+} = \{x^{+} \vert x \in V(P_T[u,v])\setminus \{v\}$ and $ x \in N(X) \}.$  
	
	{\bf Proof of Theorem \ref{thm2}.} Suppose to the contrary that there does not exist a spanning tree $T$ of $G$ such that $\vert L(R\_Stem(T))\vert \leq k.$ Then every spanning tree $T$ of $G$ satisfies $\vert L(R\_Stem(T))\vert \geq k+1$.
	
	Choose $T$ to be a spanning tree of $G$ such that
	\begin{itemize}
		\item [$($C0$)$]
		$\vert L(R\_Stem (T)) \vert$ is as small as possible,
		\item [$($C1$)$]
		$\vert R\_Stem (T) \vert$ is as small as possible, subject to (C0),
		\item [$($C2$)$]
		$\vert L(T)\vert $ is as small as possible, subject to (C0) and (C1).
	\end{itemize}

	Set $L(R\_Stem(T))=\{x_1,x_2,...,x_{l}\}, l\geq k+1.$ By the definition of reducible stem of $T$, we have the following claim.
	\begin{claim}\label{claim2.2}
		For every $i\in \{1,2,...,l\}$, there exist at least two leaf-branch paths of $T$ which are incident to $x_i$.
	\end{claim}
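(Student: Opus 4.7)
The plan is to argue that if $x_i$ had fewer than two leaf-branch paths incident to it, then its degree in $T$ would be inconsistent with its being a leaf of $R\_Stem(T)$. First I would record two immediate observations. Since $x_i \in V(R\_Stem(T))$, the vertex $x_i$ cannot itself be a leaf of $T$ (every leaf of $T$ lies on its own leaf-branch path and was deleted when forming the reducible stem), so $\deg_T(x_i) \geq 2$; moreover, $\deg_{R\_Stem(T)}(x_i) = 1$ since $x_i$ is a leaf of $R\_Stem(T)$.

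Next I would analyze the interface between $R\_Stem(T)$ and the removed leaf-branch paths. Writing each leaf-branch path $B_x$ as the path $x = v_0, v_1, \dots, v_{t-1}$ in $T$ with $v_{t-1}$ adjacent to $y_x$, one sees that every internal vertex of $B_x$ has both of its $T$-neighbors on $B_x$, while $v_{t-1}$ has its unique neighbor outside $B_x$ equal to $y_x$. Consequently, the only edges of $T$ joining a vertex of $R\_Stem(T)$ to a deleted vertex are edges of the form $y_x v_{t-1}$. In particular, every neighbor of $x_i$ in $T$ lying outside $R\_Stem(T)$ sits on some $B_x$ with $y_x = x_i$, and each such leaf $x$ contributes exactly one such edge. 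This yields the clean identity
\[
\deg_T(x_i) \;=\; \deg_{R\_Stem(T)}(x_i) + \bigl|\{x \in L(T) : y_x = x_i\}\bigr| \;=\; 1 + \bigl|\{x \in L(T) : y_x = x_i\}\bigr|.
\]

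To finish, I would observe that the number of leaf-branch paths of $T$ incident to $x_i$ is precisely $|\{x \in L(T) : y_x = x_i\}|$. If this number were at most $1$, the displayed equation would force $\deg_T(x_i) \leq 2$, hence $\deg_T(x_i) = 2$, and the existence of a leaf $x$ with $y_x = x_i$. But $y_x$ is by definition a branch vertex of $T$, so $\deg_T(x_i) \geq 3$, a contradiction. Therefore at least two leaf-branch paths are incident to $x_i$. I do not anticipate any real obstacle in the argument; the only point that needs genuine attention is the structural observation about how $R\_Stem(T)$ meets the deleted paths, and that is essentially immediate from the definition of $R\_Stem(T)$ and of a leaf-branch path.
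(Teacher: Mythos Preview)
Your argument is correct. The paper itself offers no proof beyond the remark ``By the definition of reducible stem of $T$, we have the following claim,'' so your writeup is a careful elaboration of exactly the structural observation the paper leaves implicit: a leaf $x_i$ of $R\_Stem(T)$ must be a branch vertex of $T$, and since only one of its $T$-neighbors survives in $R\_Stem(T)$, at least two of them lie on leaf-branch paths terminating at $x_i$. One small streamlining: once you have the identity $\deg_T(x_i)=1+|\{x\in L(T):y_x=x_i\}|$, you can observe directly that any degree-$2$ vertex of $T$ lying in $R\_Stem(T)$ keeps both of its neighbors (by the same analysis you gave), so $x_i$ must already be a branch vertex of $T$; this avoids the final case split.
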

	\begin{claim}\label{claim2.4}
		For each $i \in \{1,2,...,l\}$, there exist $y_i, z_i \in L(T)$ such that $B_{y_i}, B_{z_i}$ are incident to $x_i$ and $N_G(y_i) \cap (V(R\_Stem(T)) \setminus \{x_i\}) = \emptyset$ and $N_G(z_i) \cap (V(R\_Stem(T))\setminus \{x_i\}) = \emptyset$.
	\end{claim}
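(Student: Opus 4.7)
I will proceed by contradiction: suppose Claim \ref{claim2.4} fails for some index $i$. By Claim \ref{claim2.2}, at least two leaves of $T$ have leaf-branch paths incident to $x_i$, so by the failure of the claim at least one such leaf $y$ admits a vertex $v \in N_G(y) \cap (V(R\_Stem(T)) \setminus \{x_i\})$. Let $c$ denote the unique neighbor of $x_i$ in $V(R\_Stem(T))$ (which exists because $x_i$ is a leaf of $R\_Stem(T)$), write $B_y = (y = b_0, b_1, \ldots, b_{r-1})$ with $b_{r-1}$ adjacent to $x_i$, and denote the path in $R\_Stem(T)$ from $x_i$ to $v$ by $x_i, c_0 = c, c_1, \ldots, c_m = v$.

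The plan is to produce, from $T$, a spanning tree $T^*$ contradicting one of the minimality conditions $(C0)$, $(C1)$, $(C2)$. I will start with the natural choice $T^* := T + yv - x_i c$ (removing the unique $R\_Stem(T)$-edge at $x_i$ from the cycle created in $T+yv$), for which a direct degree computation gives: $y$ loses its leaf status, $\deg_{T^*}(x_i) = \deg_T(x_i) - 1$, $\deg_{T^*}(v) = \deg_T(v) + 1$, and $c$ becomes a new leaf of $T^*$ whenever $\deg_T(c) = 2$. In the central case $\deg_T(x_i) = 3$, the vertex $x_i$ is no longer a branch vertex of $T^*$, so the leaf-branch path of $T^*$ from the other leaf attached to $x_i$ extends through $x_i$ and along $b_{r-1}, \ldots, b_1, y$ to terminate at the branch vertex $v$, while any new leaf-branch path starting at $c$ runs $c, c_1, \ldots$ until hitting the first branch vertex of $T^*$ along the former $R\_Stem(T)$-path. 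I will then do a short case analysis on whether this terminating branch vertex is $v$ itself or an interior branch vertex $b^* = c_j$ of $T$ (and in the latter case on whether $\deg_{R\_Stem(T)}(b^*) = 2$ or is larger), track how $v$'s $R\_Stem$-degree changes, and verify in each sub-case that $|L(R\_Stem(T^*))| \le |L(R\_Stem(T))|$ together with either a strict inequality $|L(R\_Stem(T^*))| < |L(R\_Stem(T))|$ (contradicting $(C0)$) or $|R\_Stem(T^*)| < |R\_Stem(T)|$ (contradicting $(C1)$).

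The main obstacle will be the residual case $\deg_T(x_i) \ge 4$, where $x_i$ retains its branch status in $T^*$ and hence remains a leaf of $R\_Stem(T^*)$ under the modification above. Here the hypothesis supplies at least two bad leaves at $x_i$, and I will handle this either by iterating the modification — peeling off bad leaf-branch paths one at a time until $\deg(x_i)$ is reduced to $3$, after which the previous argument applies — or by instead removing an edge from the path $c_0c_1, c_1c_2, \ldots, c_{m-1}v$, so that the simplification is effected away from $x_i$. The delicate bookkeeping will be in controlling how interior branch vertices of $T$ along $P_T[x_i, v]$ behave under the chosen modification: an internal branch vertex of $R\_Stem(T)$ of $R\_Stem$-degree $2$ can become a new leaf of $R\_Stem(T^*)$, and these contributions must be balanced against the eliminated vertices of $R\_Stem(T)$ to ensure that the final minimality contradiction does go through.
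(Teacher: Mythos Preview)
Your approach diverges from the paper's in two coupled ways, and the second of these creates a real gap.

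First, you remove the wrong edge. When you form $T^* = T + yv - x_ic$, you cut $x_i$ off from $R\_Stem(T)$ on the stem side. The paper instead removes the edges $x_i v_{ij}$ on the leaf-branch side (where $v_{ij}$ is the neighbour of $x_i$ in $B_{a_{ij}}$), keeping $x_ic$ intact. With your choice, whenever $\deg_T(x_i)\geq 4$ the whole path $B_y$ is absorbed into $R\_Stem(T^*)$ (since $y$ now has degree~$2$ and $x_i$ remains a branch vertex connected to the rest of the tree through $B_y$ and $v$), so $|R\_Stem(T^*)|$ can strictly increase; and if the first branch vertex along $c,c_1,\ldots$ had $R\_Stem$-degree~$2$ it becomes a \emph{new} leaf of $R\_Stem(T^*)$, so $|L(R\_Stem(T^*))|$ can increase as well. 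Neither $(C0)$ nor $(C1)$ is then contradicted. Your proposed fixes do not repair this: after one ``iteration'' the stem-edge at $x_i$ has already been replaced by the $B_y$-path, so the next step is not the same modification and there is no clear reason the degree of $x_i$ drops further; and deleting an interior edge $c_jc_{j+1}$ still leaves $x_i$ with degree $\geq 4$ and $B_y$ inside the new reducible stem.

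Second, and this is the missing idea, you use only one bad leaf. The failure of the claim says that at most one of the $m\geq 2$ leaves attached at $x_i$ is ``good'', hence at least $m-1$ of them have a neighbour $b_{ij}\in V(R\_Stem(T))\setminus\{x_i\}$. The paper exploits all of these simultaneously: setting
\[
T' \;=\; T \;+\; \{a_{ij}b_{ij}\}_{j=2}^{m} \;-\; \{x_iv_{ij}\}_{j=2}^{m}
\]
drops $\deg_{T'}(x_i)$ to~$2$ in one stroke, regardless of $m$. Then $x_i$ falls out of the reducible stem entirely, giving $|R\_Stem(T')|<|R\_Stem(T)|$ with $|L(R\_Stem(T'))|\leq|L(R\_Stem(T))|$, and no case analysis on $\deg_T(x_i)$ or on the branch structure along $P_T[x_i,v]$ is needed. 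All of your bookkeeping on interior branch vertices becomes unnecessary once you make this single move.
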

	
	\begin{proof} Let $\{a_{ij}\}_{j=1}^m$ be the  subset of $L(T)$ such that $B_{a_{ij}}$ is adjacent to $x_i$. By Claim \ref{claim2.2}, we obtain $m \geq 2.$ \\
		Suppose that there are more than $m-2$ vertices $\{a_{ij}\}_{j=1}^m$ satisfying
		\begin{center}
			$N_G(a_{ij}) \cap \left( V(R\_Stem(T))\setminus \{x_i\}\right) \neq \emptyset$.
		\end{center}
			Without lost of generality, we may assume that $N_G(a_{ij}) \cap \left( V(R\_Stem(T))\setminus \{x_i\}\right) \neq \emptyset$ for all $j=2,...,m.$ Set $b_{ij}\in N_G(a_{ij}) \cap \left(V(R\_Stem(T))\setminus \{x_i\}\right)$ and $v_{ij} \in N_T(x_i) \cap V(P_T[a_{ij}, x_i])$ for all $ j \in \{2,...,m\}$. Consider the spanning tree
		$$T':= T + \{a_{ij}b_{ij}\}_{j=2}^m-\{x_iv_{ij}\}_{j=2}^m.$$
		Then $T'$ satisfies $ |L(R\_Stem (T'))|\leq |L(R\_Stem (T))|$ and $\vert R\_Stem(T')\vert < \vert  R\_Stem(T)\vert,$ where $x_i$ is not in $V(R\_Stem(T')).$ This contradicts either the condition (C0) or the condition (C1). Therefore, Claim \ref{claim2.4} holds.
	\end{proof}
	\begin{claim}\label{claim2.6}
		For every $i, j \in \{1,2,...,k+1\}, i \neq j,$ if $u \in V(P_T[y_i,x_i])$ and $v \in V(P_T[y_j,x_j]), w\in V(P_T[z_j,x_j])$ then $uv\not\in E(G)$ and $uw\not\in E(G).$ In particular, we have $ N_G(y_i) \cap V(B_{y_j}) = \emptyset$ and $N_G(y_i) \cap V(B_{z_j}) = \emptyset$.
	\end{claim}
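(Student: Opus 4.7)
The proof proceeds by contradiction, leveraging the lexicographic minimality of $T$ with respect to (C0), (C1), (C2). Suppose there exist $i \neq j$ and vertices $u \in V(P_T[y_i,x_i])$, $v \in V(P_T[y_j,x_j])$ with $uv \in E(G)$; the symmetric assertion for $w \in V(P_T[z_j,x_j])$ is handled identically, so I focus on the first assertion. The plan is to construct a new spanning tree $T'$ of $G$ that strictly improves on $T$ in this lexicographic order, yielding the contradiction.

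The basic construction is an edge swap $T' = T - x_j v_j + uv$, where $v_j$ is the neighbor of $x_j$ on the path $P_T[v,x_j]$ (this exists whenever $v \neq x_j$; the boundary case $v = x_j$ is handled by a symmetric swap that removes an edge of $P_T[y_i,u]$ incident to $x_i$). Since $x_j v_j$ lies on the unique cycle of $T+uv$, $T'$ is indeed a spanning tree. The effect is to detach the sub-path $P_T[y_j,v_j]$ from $x_j$ and re-attach it through $uv$, so that it becomes a branch off $P_T[y_i,x_i]$ meeting that path at $u$. By Claim \ref{claim2.2}, $x_j$ has at least one leaf-branch path besides $B_{y_j}$, which survives the swap. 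The analysis then splits on $\deg_T(x_j)$: if $\deg_T(x_j) = 3$, the tree-degree of $x_j$ in $T'$ drops to $2$, so $x_j$ ceases to be a branch vertex of $T'$ and therefore drops from $L(R\_Stem(T'))$, contradicting (C0); if $\deg_T(x_j) \geq 4$, the vertex $x_j$ remains a leaf of $R\_Stem(T')$ but a portion of $R\_Stem(T)$ adjacent to $x_j$ is absorbed into leaf-branch structure so that $|R\_Stem(T')|<|R\_Stem(T)|$, contradicting (C1); in the degenerate configurations where this absorption is empty, one instead arranges the swap so that $|L(T')|<|L(T)|$, contradicting (C2).

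The main obstacle is the careful bookkeeping. Since Claim \ref{claim2.6} forbids edges between entire paths (not merely between the leaves $y_i, y_j$ whose neighborhoods Claim \ref{claim2.4} already constrains), one must verify the improvement for every position of $u$ on $P_T[y_i,x_i]$ and $v$ on $P_T[y_j,x_j]$ (interior vertex, leaf endpoint $y_i$ or $y_j$, or branch endpoint $x_i$ or $x_j$). In each configuration, Claim \ref{claim2.4} is used to ensure that the neighbors of $y_i$ and $y_j$ are compatible with the new attachment producing no new leaves of the reducible stem at the re-attachment vertex $u$, while Claim \ref{claim2.2} guarantees the presence of the surviving leaf-branch path at $x_j$ that drives the reduction. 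A final delicate sub-case — when $u$ or $v$ is a branch vertex and the naive swap fails to decrease the reducible stem — is resolved using the claw-free hypothesis at $x_j$ together with the tree-switching technique exploited in the proof of Theorem \ref{thm1}.
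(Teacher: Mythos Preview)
Your edge swap removes the wrong edge. You delete the leaf–branch edge $x_j x_{jy}$ and then try to argue via (C0), (C1), (C2), but the case analysis collapses when $\deg_T(x_j)\ge 4$ and $u$ is an \emph{interior} vertex of $P_T[y_i,x_i]$ (with $v$ interior to $P_T[y_j,x_j]$). In the tree $T'=T-x_jx_{jy}+uv$:
\begin{itemize}
\item $x_j$ still has at least two leaf–branch paths and its $R\_Stem$–neighbour $t_j$, so $x_j$ remains a leaf of $R\_Stem(T')$;
\item $u$ becomes a new branch vertex with two leaf–branch paths (toward $y_i$ and toward $y_j$) and one $R\_Stem$–direction (toward $x_i$), hence $u$ is a \emph{new} leaf of $R\_Stem(T')$;
\item $x_i$ now has two $R\_Stem$–directions (toward $u$ and toward $t_i$), so $x_i$ is no longer a leaf of $R\_Stem(T')$.
\end{itemize}
Thus $|L(R\_Stem(T'))|=|L(R\_Stem(T))|$, so (C0) gives nothing. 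Your claim that ``a portion of $R\_Stem(T)$ adjacent to $x_j$ is absorbed into leaf–branch structure'' is simply false here: nothing near $x_j$ leaves the reducible stem, while the entire segment $P_T(x_i,u]\subset B_{y_i}$ \emph{enters} $R\_Stem(T')$. Hence $|R\_Stem(T')|>|R\_Stem(T)|$, the opposite of what you need for (C1). And since $y_i,y_j$ are still leaves and no leaf has been destroyed, $|L(T')|=|L(T)|$, so (C2) also fails. The symmetric swap at $x_i$ runs into the same obstruction when $\deg_T(x_i)\ge 4$, so if both degrees are large you are stuck. Your appeal to the claw-free hypothesis in a ``final delicate sub-case'' does not rescue this; the claim holds without claw-freeness and the paper's proof never uses it.

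The paper's argument avoids all of this by removing a different edge. Adding $uv$ creates a cycle $C$ in $T$ that contains the entire $R\_Stem(T)$–path from $x_i$ to $x_j$. Because $k\ge 2$ forces $|L(R\_Stem(T))|\ge 3$, this path must pass through a branch vertex $b$ of $R\_Stem(T)$; one then deletes an edge of $C$ incident to $b$ that lies in $R\_Stem(T)$. Cutting inside the reducible stem at a branch vertex guarantees that at most one new $R\_Stem$–leaf is created while at least two old ones ($x_i$ and $x_j$, or one of them with no new leaf) disappear, giving a strict drop in $|L(R\_Stem)|$ and an immediate contradiction with (C0). The point you are missing is that the edge to delete must be chosen on the $R\_Stem$–portion of the cycle, not on a leaf–branch path.
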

		\begin{figure}[h]
		\centering
		\includegraphics[width=0.7\textwidth]{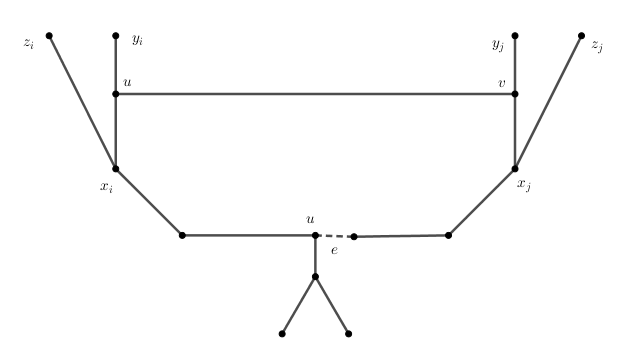}
		\caption[Tree T'']{Tree $T''$}
		\label{Pic2}
	\end{figure}
	\begin{proof}  By the same role of $y_j$ and $z_j,$ we only need to prove $ uv \not\in E(G).$ Suppose the assertion of the claim is false. Set $T' := T + uv.$ Then $T'$ is a subgraph of $G$ including a unique cycle $C,$ which contains both $x_i$ and $x_j$. 
		Since $k\geq 2$, then $ |L(R\_Stem (T))| \geq 3.$ Hence, we obtain $|B(R\_Stem(T))| \geq 1$. Then there exists a branch vertex of $R\_Stem(T)$ contained in $C.$ Let $e$ be an edge incident to such a vertex in $C$ and $R\_Stem(T)$. By removing the edge $e$ from $T'$ we obtain a spanning tree $T''$ (see Figure 2). Hence $T''$ satisfies $|L(R\_Stem(T''))|<|L(R\_Stem(T))|,$ the reason is that either $R\_Stem(T'')$ has only one new leaf and $x_i, x_j$ are not leaves of $R\_Stem(T'')$ or $x_i$ (or $x_j$) is still a leaf of $R\_Stem(T'')$ but $R\_Stem(T'')$ has no new leaf and $x_j$ (or $x_i$ respectively ) is not a leaf of $R\_Stem(T'')$. This is a contradiction with the condition (C0). So Claim \ref{claim2.6} is proved.
	\end{proof}
	We obtain the following claim as a corollary of Claim \ref{claim2.6}. 
	\begin{claim}\label{claim2.3}
		$L(R\_Stem(T))$ is an independent set in $G$.
	\end{claim}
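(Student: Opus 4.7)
The plan is that Claim \ref{claim2.3} follows immediately from Claim \ref{claim2.6}, so the approach is simply to specialise that claim to the endpoints of the two paths involved. For each index $i$, Claim \ref{claim2.4} supplies a leaf $y_i \in L(T)$ whose nearest branch vertex in $T$ is $x_i$, and $x_i$ itself is the branch-vertex endpoint of $P_T[y_i, x_i]$ in $T$, so $x_i \in V(P_T[y_i, x_i])$; symmetrically $x_j \in V(P_T[y_j, x_j])$ for any $j \neq i$.

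Taking $u := x_i$ and $v := x_j$ in the hypothesis of Claim \ref{claim2.6} then yields directly $x_i x_j \notin E(G)$. Running this over every unordered pair $\{i,j\}$ with $i \neq j$ shows that no two elements of $L(R\_Stem(T)) = \{x_1, x_2, \ldots, x_l\}$ are adjacent in $G$, which is exactly the statement that $L(R\_Stem(T))$ is an independent set.

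The only bookkeeping point is an indexing mismatch to flag: Claim \ref{claim2.6} is written with $i, j \in \{1, \ldots, k+1\}$, while $L(R\_Stem(T))$ could contain up to $l \geq k+1$ leaves. However, this restriction is purely cosmetic. Claim \ref{claim2.4} provides $y_i, z_i$ for every $i \in \{1, \ldots, l\}$, and the cycle-breaking construction in the proof of Claim \ref{claim2.6} relies only on the existence of a branch vertex of $R\_Stem(T)$ on the unique cycle of $T + uv$, which is guaranteed whenever $|L(R\_Stem(T))| \geq 3$, i.e.\ whenever $k \geq 2$. Thus the identical argument applies for every pair $i \neq j$ in $\{1, \ldots, l\}$, and no real obstacle arises — the claim is a one-line corollary of Claim \ref{claim2.6}.
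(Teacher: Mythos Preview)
Your proposal is correct and matches the paper's approach exactly: the paper simply states that Claim~\ref{claim2.3} is a corollary of Claim~\ref{claim2.6}, and you have spelled out the one-line specialization $u:=x_i$, $v:=x_j$. Your observation about the $\{1,\dots,k+1\}$ versus $\{1,\dots,l\}$ indexing mismatch is accurate and your resolution is valid, since the proof of Claim~\ref{claim2.6} indeed only needs $|L(R\_Stem(T))|\geq 3$.
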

	Set $U_1 =\{y_i,z_i\}_{i=1}^{l}$. For each $i \in \{1,...,l\}$ we also set $ x_{iy} \in N_T(x_i)\cap V(B_{y_i})$ and $x_{iz}\in N_T(x_i)\cap V(B_{z_i}).$ 
	\begin{claim}\label{claim2.5}
		$U_1$ is an independent set in $G$.
	\end{claim}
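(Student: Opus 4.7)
The plan is to argue by contradiction: assume distinct vertices $a,b\in U_1$ are adjacent in $G$, aiming to produce a spanning tree $T'$ that beats $T$ in the lexicographic order induced by (C0), (C1), (C2). For the cross pairs, where $a\in\{y_i,z_i\}$ and $b\in\{y_j,z_j\}$ with $i\neq j$, the conclusion is immediate from Claim \ref{claim2.6}. Applied with $u=y_i$, that claim gives $N_G(y_i)\cap(V(B_{y_j})\cup V(B_{z_j}))=\emptyset$, which rules out $y_iy_j$ and $y_iz_j$; and because $y_i,z_i$ satisfy the identical symmetric condition in Claim \ref{claim2.4}, re-running the same proof with $u\in V(P_T[z_i,x_i])$ rules out $z_iy_j$ and $z_iz_j$. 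So only the diagonal case $y_iz_i\in E(G)$ remains.

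Assume $y_iz_i\in E(G)$ for some $i$, and set $T':=T+y_iz_i-x_ix_{iz}$. Since $x_ix_{iz}$ lies on the unique $y_i$--$z_i$ path in $T$, $T'$ is a spanning tree; the leaves $y_i,z_i$ become internal, $x_{iz}$ becomes a new leaf (so $|L(T')|=|L(T)|-1$), and only the degree of $x_i$ is otherwise altered, dropping by one. I then split on $\deg_T(x_i)$. If $\deg_T(x_i)\geq 4$, then $x_i$ remains a branch vertex of $T'$, the branch sets of $T$ and $T'$ coincide, and the two leaf-branch paths $B_{y_i},B_{z_i}$ of $T$ are simply merged via $y_iz_i$ into a single leaf-branch path of $T'$ from $x_{iz}$ to $x_i$; hence $R\_Stem(T')=R\_Stem(T)$ as a subtree, but $|L(T')|<|L(T)|$ contradicts (C2). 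If $\deg_T(x_i)=3$, then by Claim \ref{claim2.2} the vertex $x_i$ has exactly two leaf-branch neighbours, $x_{iy}$ and $x_{iz}$, together with one neighbour $p\in V(R\_Stem(T))$, and in $T'$ the vertex $x_i$ has degree $2$ and is no longer a branch. The leaf-branch path of the new leaf $x_{iz}$ in $T'$ therefore extends past $x_i$ along the $R\_Stem(T)$-path $x_i,p,p_1,\ldots$ until reaching the first vertex $p^\ast$ that is still a branch of $T$. Because $|L(R\_Stem(T))|\geq k+1\geq 3$, walking from $x_i$ inside $R\_Stem(T)$ must eventually meet a vertex $q$ of degree $\geq 3$ in $R\_Stem(T)$ (otherwise $R\_Stem(T)$ would be a single path with only two leaves). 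If $p^\ast=q$, then $q$ still has degree $\geq 2$ in $R\_Stem(T')$, so $|L(R\_Stem(T'))|=|L(R\_Stem(T))|-1$, contradicting (C0); otherwise $p^\ast$ is one of the intermediate vertices of degree $2$ in $R\_Stem(T)$, which becomes a new leaf of $R\_Stem(T')$, so the reducible-stem leaf count is preserved while $|R\_Stem(T')|<|R\_Stem(T)|$, contradicting (C1).

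The main obstacle is precisely the $\deg_T(x_i)=3$ subcase: the edge exchange sweeps $x_i$ out of the reducible stem, but because the stem-neighbour $p$ of $x_i$ can itself be a degree-$2$ vertex of $T$, the new leaf-branch path of $x_{iz}$ may plough arbitrarily far into $R\_Stem(T)$, and one must locate exactly where it stops in order to decide whether the improvement is recorded by (C0) or by (C1).
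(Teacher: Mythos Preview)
Your argument is correct and rests on the same edge-exchange idea as the paper, but the two proofs are organised differently. The paper treats every adjacent pair $u,v\in U_1$ uniformly: writing $v=y_i$, it forms $T':=T+uy_i-x_{iy}x_i$ and then splits on $\deg_T(x_i)$, so cross pairs and the diagonal pair $y_iz_i$ are handled by one stroke. You instead clear the cross pairs first via Claim~\ref{claim2.6} (plus the evident $y_i\leftrightarrow z_i$ symmetry) and reserve the exchange for the single remaining case $y_iz_i\in E(G)$. Both routes arrive at the same contradiction; the paper's is shorter, while yours avoids having to check what the exchange does to $x_j$ when $u\in\{y_j,z_j\}$.

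Where your write-up actually adds value is in the $\deg_T(x_i)=3$ subcase. The paper simply asserts that once $x_i$ drops to degree~$2$ one has $|R\_Stem(T')|<|R\_Stem(T)|$ and hence a contradiction with (C0) or (C1); you unpack this by locating the first $T'$-branch vertex $p^\ast$ beyond $x_i$ and distinguishing whether $p^\ast$ coincides with the first branch vertex $q$ of $R\_Stem(T)$ (giving $|L(R\_Stem(T'))|<|L(R\_Stem(T))|$) or lies strictly before it (giving $|L(R\_Stem(T'))|=|L(R\_Stem(T))|$ but $|R\_Stem(T')|<|R\_Stem(T)|$). That is exactly the content hidden in the paper's phrase ``either (C0) or (C1)''. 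One tiny cosmetic point: your sentence ``$x_{iz}$ becomes a new leaf'' tacitly assumes $x_{iz}\neq z_i$; when $|B_{z_i}|=1$ the vertex $z_i=x_{iz}$ remains a leaf while only $y_i$ becomes internal, but the conclusion $|L(T')|=|L(T)|-1$ and the rest of the analysis are unaffected.
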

	\begin{proof}
		Suppose that there exist two vertices $u, v \in U_1$ such that $uv \in E(G).$ Without lost of generality, we may assume that $v=y_i$ for some $i\in \{1,2,...,l\}.$  Consider the spanning tree $T':= T +uy_i -x_{iy}x_i.$ Then  $|L(R\_Stem (T'))|\leq |L(R\_Stem (T))|.$ If $\deg_T(x_i)=3$ then $x_i$ is not a branch vertex of $T'.$ Hence $|R\_Stem(T')| < |R\_Stem(T)|,$ this contradicts either the conditions (C0) or (C1). Otherwise, we have $|L(R\_Stem (T'))|= |L(R\_Stem (T))|,$ $|R\_Stem(T')| = |R\_Stem(T)|$ and $|L(T')|< |L(T)|,$  where either $T'$ has only one new leaf and $y_i, u$ are not leaves of $T'$ or $y_i$ is still a leaf of $T'$ but $T'$ has no any new leaf and $u$ is not a leaf of $T'$. This contradicts the condition (C2). The proof of Claim \ref{claim2.5} is completed.
	\end{proof}
	Now, we choose $T$ to be a spanning tree of $G$ satisfying
	\begin{itemize}
		\item [$($C3$)$]
		$\sum_{i=1}^{l}\deg_T(x_i)$ is as small as possible, subject to (C0)-(C2),
		\item [$($C4$)$]
		$\sum_{i=1}^{l}\bigg( |B_{y_i}| + |B_{z_i}| \bigg)$ is as large as possible, subject to (C0)-(C3).
	\end{itemize}
	Set $U=U_1\cup L(R\_Stem(T)).$
	\begin{claim}\label{claim2.5.1}
		$U$ is an independent set in $G$.
	\end{claim}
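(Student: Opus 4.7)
The plan is to rule out every edge of $G$ with both endpoints in $U = U_1 \cup L(R\_Stem(T))$. Claim~\ref{claim2.5} and Claim~\ref{claim2.3} already take care of edges internal to $U_1$ and to $L(R\_Stem(T))$ respectively, so the remaining work is to show that $y_i x_j \notin E(G)$ and $z_i x_j \notin E(G)$ for all $i,j \in \{1,\dots,l\}$. When $i \neq j$, this is immediate from Claim~\ref{claim2.6}, since $y_i$ is an endpoint of $P_T[y_i, x_i]$ and $x_j$ is an endpoint of $P_T[y_j, x_j]$ (and symmetrically for $z_i$).

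For the case $i=j$, I would argue by contradiction: suppose $y_i x_i \in E(G)$. First consider the subcase $y_i x_i \in E(T)$, which forces $|B_{y_i}|=1$ and makes $y_i$ a direct tree-neighbor of $x_i$. Since $\deg_T(x_i) \geq 3$, pick a third tree-neighbor $w$ of $x_i$ besides $y_i$ and $x_{iz}$. By Claim~\ref{claim2.4}, $y_i w^* \notin E(G)$, where $w^*$ is the unique $R\_Stem(T)$-neighbor of $x_i$; by Claim~\ref{claim2.5}, $y_i z_i \notin E(G)$, which handles the edge case $|B_{z_i}|=1$ where $x_{iz}=z_i$. Applying claw-freeness to $G[\{x_i,y_i,x_{iz},w\}]$ yields at least one of $y_i x_{iz}$, $y_i w$, $x_{iz} w$ in $E(G)$. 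In each case a swap such as $T^*=T+y_i x_{iz}-x_i x_{iz}$ reduces $\deg_T(x_i)$ by one. If $\deg_T(x_i)=3$, then $x_i$ ceases to be a branch vertex of $T^*$ and is absorbed into a leaf-branch path, forcing $|L(R\_Stem(T^*))|<l$ and contradicting (C0). If $\deg_T(x_i) \geq 4$, the modification leaves $x_i$ a branch vertex with the same leaf set of the reducible stem, but either strictly decreases $|L(T)|$ or strictly decreases $\sum_j \deg_T(x_j)$, contradicting (C2) or (C3).

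The main obstacle is the remaining subcase, where $y_i x_i \notin E(T)$ so that $|B_{y_i}| \geq 2$ and $y_i x_i$ is a chord of $T$. The natural swap $T^{\prime}=T+y_i x_i-x_{iy} x_i$ turns out to preserve all four measures (C0)--(C4), merely relabeling the role of $y_i$ onto the new leaf $x_{iy}$; the adjacency to $x_i$ is thus transported rather than destroyed. To handle this case, I would combine the chord $y_i x_i$ with claw-freeness applied at $x_i$ to the triple $\{y_i, x_{iy}, x_{iz}\}$ of $G$-neighbors of $x_i$, and perform a swap based on which of the three resulting $G$-edges exists, so as either to absorb $x_i$ into a longer leaf-branch path (contradicting (C0) when $\deg_T(x_i)=3$) or to strictly decrease $|L(T)|$ or $\sum_j \deg_T(x_j)$ (contradicting (C2) or (C3) when $\deg_T(x_i) \geq 4$). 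The delicate step is verifying, across all sub-configurations, that the produced tree strictly improves one of (C0)--(C4) without worsening any earlier one, since many candidate swaps preserve the measures exactly rather than strictly improve them.
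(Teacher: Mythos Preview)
Your reduction to the case $v=y_i$, $u=x_i$ (via Claims~\ref{claim2.3}, \ref{claim2.5}, \ref{claim2.6}) matches the paper exactly, and that is the only nontrivial case. The gap is in how you handle it.

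In your second subcase ($y_i x_i \in E(G)\setminus E(T)$), the claw you propose at $x_i$ with leaves $\{y_i, x_{iy}, x_{iz}\}$ does not do the job: when $|B_{y_i}|=2$ the vertices $y_i$ and $x_{iy}$ are already adjacent in $T$, so claw-freeness is satisfied trivially and yields no new edge. Even when $|B_{y_i}|\ge 3$, the edge $y_i x_{iy}$ may well lie in $E(G)$, and having two $G$-neighbours of $x_i$ that are both on $B_{y_i}$ gives you no reroute that strictly improves any of (C0)--(C4); the ``natural swap'' you mention just relabels the picture, and the $y_i x_{iy}$ edge does the same. Your first subcase $y_i x_i\in E(T)$ also leaves $w$ unspecified and does not handle the possibility that the claw-freeness edge is between two leaf-branch neighbours of $x_i$ (which again leads to a swap that need not strictly improve anything).

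The paper avoids this by choosing the \emph{stem} neighbour $t\in N_T(x_i)\cap V(R\_Stem(T))$ in place of $x_{iy}$. The point is that Claim~\ref{claim2.4} forces $y_i t\notin E(G)$, so after first disposing of $y_i x_{iz}\in E(G)$ (via $T+y_i x_{iz}-x_i x_{iz}$, contradicting (C1) or (C2)), claw-freeness of $G[x_i,t,x_{iz},y_i]$ yields $t x_{iz}\in E(G)$ or $t y_i\in E(G)$. Either of these lets you reroute a leaf-branch path of $x_i$ through $t$ instead of through $x_i$, strictly decreasing $\sum_j \deg_T(x_j)$ while preserving (C0)--(C2); this is precisely where condition (C3) is used. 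Replacing $x_{iy}$ by $t$ in your claw is the missing idea.
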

	\begin{proof}
		Suppose that there exist two vertices $u, v \in U$ such that $uv \in E(G).$ By Claims \ref{claim2.3} and \ref{claim2.5}, without lost of generality, we may assume that $u\in L(R\_Stem(T))$ and $v=y_i\in U_1$ for some $i\in \{1,2,...,l\}.$ Moreover, by Claim \ref{claim2.6}, we now only need to consider the case $u=x_i.$\\ Set $t \in N_{T}(x_i)\cap V(R\_Stem(T)).$ 
		
		If $y_ix_{iz} \in E(G).$ Consider the spanning tree $T':= T+y_ix_{iz} - x_{iz}x_i.$ Then  $|L(R\_Stem (T'))|\leq |L(R\_Stem (T))|.$ If $\deg_T(x_i)=3$ then $x_i$ is not a branch vertex of $T'.$ Hence $|R\_Stem(T')| < |R\_Stem(T)|,$ this contradicts either the conditions (C0) or (C1). Otherwise, we have $|L(R\_Stem (T'))|= |L(R\_Stem (T))|,$ $|R\_Stem(T')| = |R\_Stem(T)|$ and $|L(T')|< |L(T)|,$ this contradicts the condition (C2). Now, since $G[x_i, t, x_{iz}, y_i]$ is not $K_{1,3}$-free, we obtain that $tx_{iz} \in E(G)$ or $ty_i\in E(G).$ We consider the spanning tree 
		$$T':=\left\{\begin{array}{ll}T+tx_{iz} - x_{iz}x_i,
		& \;\mbox{ if } tx_{iz} \in E(G),\\
		T+ty_i - x_{iy}x_i, & \;\mbox{ if } ty_i\in E(G).
		\end{array}\right.$$
			If $\deg_T(x_i)=3$ then we obtain $|L(R\_Stem(T'))|\leq |L(R\_Stem(T))|$ and $|R\_Stem(T')| < |R\_Stem(T)|,$ a contradiction with (C0) or (C1). Otherwise, we have $L(R\_Stem(T'))= L(R\_Stem(T))=\{x_i\}_{i=1}^{k+1},$ $|R\_Stem(T')| = |R\_Stem(T)|,$ $|L(T')|=|L(T)|$ and $\sum_{i=1}^{l}\deg_{T'}(x_i)<\sum_{i=1}^{l}\deg_T(x_i).$ This also violates the condition (C3).\\	
		Therefore, the proof of Claim \ref{claim2.5.1} is completed.
	\end{proof}
	\vskip 0.2cm
	By Claim \ref{claim2.5.1} we conclude that  $\alpha(G)\geq 3l \geq 3k+3.$ 
	\begin{claim}\label{claim2.7}
		For every $ p \in L(T)\setminus U_1,$ then $ \sum_{u\in U}|N_G(u) \cap V(B_p) | \leq \vert B_p\vert$.
	\end{claim}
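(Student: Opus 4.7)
The plan is to establish the much stronger fact that essentially no edge joins $U$ to $V(B_p)$, with at most a single edge from some $x_i$ to the $T$-neighbor of $y_p$ lying on $B_p$. Write $B_p=v_0v_1\cdots v_s$ with $v_0=p$ and $v_s$ adjacent to $y_p$ in $T$. I would first prove the preliminary fact that $N_G(p)\cap V(R\_Stem(T))=\emptyset$ whenever $|B_p|\geq 2$: if $pb\in E(G)$ with $b\in V(R\_Stem(T))$, then in $T''=T+pb-pv_1$ the union of leaf-branch paths coincides with that in $T$, so $V(R\_Stem(T''))=V(R\_Stem(T))$ and $L(R\_Stem(T''))=L(R\_Stem(T))$, but $v_1$ becomes a new leaf while $p$ stays a leaf, giving $|L(T'')|=|L(T)|+1$ and contradicting (C2).

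Next, I would rule out the remaining edges between $U$ and $V(B_p)$ case by case using similar modifications. For $u=y_i\in U_1$ (the case $u=z_i$ being symmetric) and $v_j$ with $j<s$, the modification $T'=T+y_iv_j-v_jv_{j+1}$ preserves $V(R\_Stem(T))$, $L(R\_Stem(T))$, $|L(T)|$, and $\sum_i\deg_T(x_i)$; however $p$ becomes a leaf of $T'$ attached to $x_i$ through the longer leaf-branch path $\{p,v_1,\dots,v_j,y_i,a_1,\dots,a_t\}$, and by the preliminary fact $p$ satisfies the property of Claim \ref{claim2.4} for $T'$. Replacing $y_i$ by $p$ in the choice entering (C4) then strictly increases $\sum_i(|B_{y_i}|+|B_{z_i}|)$, contradicting (C4). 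For $j=s$, the modification $T'=T+y_iv_s-v_sy_p$ (or $T+y_iv_s-v_sx_i$ if $y_p=x_i$) destroys the leaf $y_i$ with no replacement, giving $|L(T')|<|L(T)|$ and contradicting (C2). For $u=x_i\in L(R\_Stem(T))$ and $v_j$ with $j<s$, the modification $T'=T+x_iv_j-v_jv_{j+1}$ creates $v_{j+1}$ as a new leaf while preserving all other structural invariants, so $|L(T')|=|L(T)|+1$ contradicts (C2); hence $x_iv_j\notin E(G)$ for every $j<s$.

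Finally, a direct claw-free argument yields $|N_G(v_s)\cap L(R\_Stem(T))|\leq 1$: if distinct $x_{i_1},x_{i_2}$ were both adjacent to $v_s$, then $\{x_{i_1},x_{i_2},v_{s-1}\}$ would be independent (the pairs $x_{i_l}v_{s-1}$ are non-edges by the $j<s$ case just treated, and $x_{i_1}x_{i_2}\notin E(G)$ by Claim \ref{claim2.5.1}), producing a claw at $v_s$. Combining the three steps, at most one edge joins $U$ to $V(B_p)$, so $\sum_{u\in U}|N_G(u)\cap V(B_p)|\leq 1\leq|B_p|$; the degenerate case $|B_p|=1$ (so $v_0=v_s=p$) is handled by analogous modifications in the collapsed configuration. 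The main obstacle throughout is the careful bookkeeping showing that each modification truly preserves $V(R\_Stem)$ and $L(R\_Stem)$, so that the apparent violation of (C2) or (C4) is genuine and not offset by an improvement higher in the hierarchy (C0)-(C1).
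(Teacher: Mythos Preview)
Your argument contains a recurring and fatal error in the use of condition (C2). You write that $|L(T'')|=|L(T)|+1$ ``contradicts (C2)'' in the preliminary fact, and again that $|L(T')|=|L(T)|+1$ ``contradicts (C2)'' in the case $u=x_i$, $j<s$. But (C2) asserts that $|L(T)|$ is as \emph{small} as possible subject to (C0)--(C1); producing a tree with \emph{more} leaves is perfectly consistent with (C2) and yields no contradiction whatsoever. To violate (C2) you must exhibit a spanning tree with the same (C0)--(C1) values and \emph{strictly fewer} leaves. This misreading of the extremality direction invalidates your preliminary fact, your treatment of the case $u=x_i$ with $j<s$, and by dependency your claw argument at $v_s$ (which needs $x_{i_l}v_{s-1}\notin E(G)$).

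In fact the stronger statement you aim for---that at most one edge joins $U$ to $V(B_p)$---is not what the paper proves and may well be false. The paper's proof handles $U_1$ as you do (with correct modifications that genuinely lower $|L(T)|$ or raise the (C4) sum), but for $L(R\_Stem(T))$ it establishes only the much weaker bound $\sum_{u\in L(R\_Stem(T))}|N_G(u)\cap\{x\}|\le 1$ for \emph{each} $x\in V(B_p)$. This is obtained not by a claw argument but by a cycle argument: if two distinct $x_i,x_j$ were both adjacent to some $x\in V(B_p)$, one builds a subgraph with a unique cycle through $x_i$ and $x_j$, then deletes an edge of that cycle at a branch vertex of $R\_Stem(T)$ to obtain a spanning tree whose reducible stem has strictly fewer leaves, contradicting (C0). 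Summing this per-vertex bound over $V(B_p)$ gives the claim. Your route, as written, does not reach the conclusion.
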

		\begin{figure}[h]
		\centering
		\includegraphics[width=0.7\textwidth]{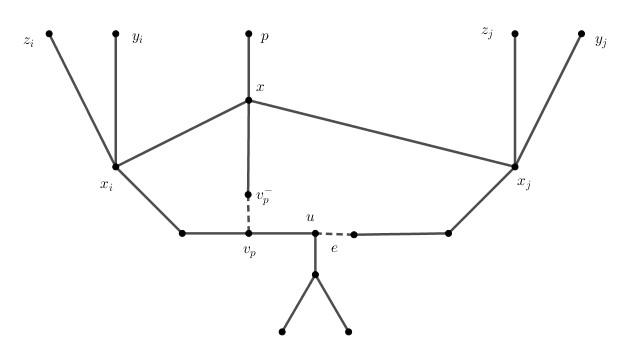}
		\caption[Tree T']{Tree $T'$}
		\label{Pic3}
	\end{figure}
	\begin{proof}  Set $v_p \in B(T)$ such that $\left( V(P_T[p,v_p]) \setminus \{v_p\}\right)\cap B(T) = \emptyset$. Let $V(B_p)\cap N_T(v_p)=\{v_p^{-}\}.$ Then we consider $B_p=P_T[p,v_p^{-}].$ 
		
		Assume that there exists a vertex $x \in V(B_p)$ such that $xu \in E(G)$ for some $u \in U_1.$ Consider the spanning tree
		$$T':=\left\{\begin{array}{ll}T+xu-v_p^{-}v_p,
		& \;\mbox{ if } x\in \{v_p^{-}, p\},\\
		T+xu -xx^+, & \;\mbox{ if } x \not\in \{v_p^{-},p\}.
		\end{array}\right.$$
		This contradicts  either the condition (C2) if $x\in \{v_p^{-}, p\}$  or the condition (C4) for otherwise. Therefore, we conclude that $
		\sum_{u\in U_1}|N_G(u) \cap V(B_p) |=0.
		$
		
		Assume that there exist $x_i, x_j\in L(R\_Stem(T))$ for some $i\not= j$ and $x\in V(B_p)$ such that $xx_i, xx_j \in E(G).$ Set 
		$$G':=\left\{\begin{array}{ll}T+xx_j,
		& \;\mbox{ if } x_i=v_p,\\
		T +\{xx_i, xx_j\}-\{v_pv_p^{-}\}, & \;\mbox{ if } x_i \not= v_p.
		\end{array}\right.$$
		Then $G'$ is a subgraph of $G$ including a unique cycle $C,$ which contains both $x_i$ and $x_j$. Since $k\geq 2$, then $ |L(R\_Stem (T))| \geq 3.$ Hence, we obtain $|B(R\_Stem(T))| \geq 1$. Then there exists a branch vertex of $R\_Stem(T)$ contained in $C.$ Let $e$ be an edge incident to such a vertex in $C$. By removing the edge $e$ from $G'$ we obtain a spanning tree $T'$ of $G$ satisfying $|L(R\_Stem(T'))|<|L(R\_Stem(T))|,$ the reason is that either $R\_Stem(T')$ has only one new leaf and $x_i, x_j$ are not leaves of $R\_Stem(T')$ or $x_i$ (or $x_j$) is still a leaf of $R\_Stem(T')$ but $R\_Stem(T')$ has no new leaf and $x_j$ (or $x_i$ respectively) is not a leaf of $R\_Stem(T')$ (see Figure 3 for an example). This is a contradiction with the condition (C0). Therefore, we concludes that $
		\sum_{u\in L(R\_Stem(T))}|N_G(u) \cap \{x\} |\leq 1
		$ for every $x \in V(B_p).$
		Now we obtain the following
		$$
		\sum_{u\in U}|N_G(u) \cap V(B_p) | = \sum_{u\in U_1}|N_G(u) \cap V(B_p) |+\sum_{u\in L(R\_Stem(T))}|N_G(u) \cap V(B_p) |\leq |B_{p}|.
		$$
		Claim \ref{claim2.7} is proved.
	\end{proof}
	
	\begin{claim}\label{claim2.8}
		For every $1 \leq i \leq k+1$, then $
		\sum_{u\in U}|N_G(u) \cap V(B_{y_i}) | \leq  \vert B_{y_i} \vert $ and $\sum_{u\in U}|N_G(u) \cap V(B_{z_i}) | \leq  \vert B_{z_i}\vert .$
	\end{claim}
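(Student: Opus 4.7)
The plan is to adapt the proof of Claim \ref{claim2.7} to the case where the target leaf-branch path is $B_{y_i}$; the case $B_{z_i}$ then follows by the symmetry of $y_i$ and $z_i$ at $x_i$. Rewriting
\[
\sum_{u\in U}|N_G(u)\cap V(B_{y_i})| \;=\; \sum_{v\in V(B_{y_i})}|N_G(v)\cap U|,
\]
I would split the inner cardinality into contributions from $L(R\_Stem(T))$ and from $U_1$, bounding the two separately.

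For the $L(R\_Stem(T))$-piece, the second tree-exchange in the proof of Claim \ref{claim2.7} transfers verbatim: if some $v\in V(B_{y_i})\setminus\{y_i\}$ were adjacent in $G$ to two distinct leaves $x_\alpha, x_\beta\in L(R\_Stem(T))$, then $T+\{vx_\alpha,vx_\beta\}-e$, with $e$ chosen on the resulting cycle incident to a branch vertex of $R\_Stem(T)$, is a spanning tree $T'$ with $|L(R\_Stem(T'))|<|L(R\_Stem(T))|$, contradicting (C0). For $v=y_i$, Claim \ref{claim2.5.1} gives $|N_G(y_i)\cap L(R\_Stem(T))|=0$. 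The total contribution from this piece is thus at most $|B_{y_i}|-1$.

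For the $U_1$-piece, Claim \ref{claim2.6} restricts the possible $U_1$-neighbors of any $v\in V(B_{y_i})$ to $\{y_i,z_i\}$, so the task reduces to proving $|N_G(y_i)\cap V(B_{y_i})|+|N_G(z_i)\cap V(B_{y_i})|\leq 1$. Writing $V(B_{y_i})=\{a_1=y_i, a_2,\ldots,a_s=x_{iy}\}$ along the tree-path, the tree edge $y_ia_2$ contributes $1$ when $s\geq 2$, so one must rule out (i) $y_ia_t\in E(G)$ for $t\geq 3$ and (ii) $z_ia_t\in E(G)$ for $1\leq t\leq s$. The case $a_t=y_i$ in (ii) is excluded by Claim \ref{claim2.5}; for $2\leq t\leq s$ the exchange $T':=T+z_ia_t-x_{iy}x_i$ eliminates $z_i$ from $L(T)$ and contradicts (C2) when $\deg_T(x_i)\geq 4$, while a secondary exchange guided by claw-freeness of $G[x_i,x_{iy},x_{iz},w_i]$ (with $w_i$ the third tree-neighbor of $x_i$) handles $\deg_T(x_i)=3$ and yields a contradiction with (C0) or (C1), along the lines of the proof of Claim \ref{claim2.5.1}. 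For (i), applying claw-freeness to $G[a_t, y_i, a_{t-1}, a_{t+1}]$ (or to $G[a_s, y_i, a_{s-1}, x_i]$ when $t=s$) either gives the chord $a_{t-1}a_{t+1}\in E(G)$ or forces a further $V(B_{y_i})$-neighbor of $y_i$ closer to $a_2$; iterating on the minimal such $t$ reduces to the chord case, and the two-step exchange $T+\{y_ia_t,a_{t-1}a_{t+1}\}-\{a_{t-1}a_t,a_ta_{t+1}\}$ turns $a_t$ into a new leaf while keeping (C0)--(C3) unchanged. A careful reapplication of Claim \ref{claim2.4} to the new tree then reveals that the new choice of distinguished leaves at $x_i$ strictly improves the quantity in (C4), contradicting the maximality condition.

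Combining the two pieces yields $\sum_{u\in U}|N_G(u)\cap V(B_{y_i})|\leq 1+(|B_{y_i}|-1)=|B_{y_i}|$, as required. The principal difficulty I foresee is sub-case (i) of the $U_1$-piece: the single-step exchange $T+y_ia_t-a_{t-1}a_t$ is neutral for all of (C0)--(C4), so the contradiction can only be extracted by chaining the claw-free-forced chord with the two-step exchange and then carefully tracking via Claim \ref{claim2.4} how the new leaf-branch paths at $x_i$ compare with the old ones.
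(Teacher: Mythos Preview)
Your decomposition into an $L(R\_Stem(T))$-piece and a $U_1$-piece is fine, and your bound $|B_{y_i}|-1$ for the first piece is correct. The gap is in the $U_1$-piece: the target inequality
\[
|N_G(y_i)\cap V(B_{y_i})|+|N_G(z_i)\cap V(B_{y_i})|\le 1
\]
is too strong, and neither of your cases (i), (ii) actually goes through.

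For (ii), the exchange $T':=T+z_ia_t-x_{iy}x_i$ does \emph{not} lower $|L(T)|$ when $a_t\neq x_{iy}$: the vertex $x_{iy}$ had tree-neighbours $a_{s-1}$ and $x_i$ and now has only $a_{s-1}$, so it becomes a new leaf that exactly compensates for $z_i$. Thus (C2) is not violated, and your claimed contradiction disappears. (In fact the paper allows $z_i$ to have neighbours inside $B_{y_i}$; it never rules this out.)

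For (i), the claw test on $G[a_t,y_i,a_{t-1},a_{t+1}]$ is vacuous at the minimal $t=3$, since $a_{t-1}=a_2$ is a tree-neighbour of $y_i$ and hence $\{y_i,a_{t-1},a_{t+1}\}$ is not independent. More generally, your two-step exchange $T+\{y_ia_t,a_{t-1}a_{t+1}\}-\{a_{t-1}a_t,a_ta_{t+1}\}$ replaces the leaf $y_i$ by a new leaf $a_t$ whose leaf-branch path to $x_i$ has exactly the same length $s$ as $B_{y_i}$, so (C4) is \emph{not} improved; the asserted contradiction does not materialise.

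The paper circumvents all of this with a shift argument on $P_T[y_i,x_i]$. It first shows (via claw-freeness, using the auxiliary fact $x_{iy}x_{iz}\in E(G)$) that $N_G(z_i)\cap V(B_{y_i})$ and $N_G(x_i)\cap V(B_{y_i})$ are disjoint, and that whenever $x\in N_G(y_i)\cap V(B_{y_i})$ one has $x^-\notin N_G(z_i)\cup N_G(x_i)$. Hence $\{y_i\}$, $N_G(y_i)\cap V(P_T[y_i,x_i])$, and $\bigl(N_G(\{z_i,x_i\})\cap P_T[y_i,x_i]\bigr)^{+}$ are pairwise disjoint subsets of $P_T[y_i,x_i]$, which has $|B_{y_i}|+1$ vertices, yielding the bound directly. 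This is the missing idea: rather than forbidding extra $y_i$- or $z_i$-edges into $B_{y_i}$, one shows that the neighbour sets of $y_i$ and of $\{z_i,x_i\}$ interleave without overlap after a one-step shift.
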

	
	\begin{proof}
		By the same role of $y_i$ and $z_i,$ we only need to prove $$
		\sum_{u\in U}|N_G(u) \cap V(B_{y_i}) | \leq  \vert B_{y_i} \vert.$$ We consider $B_{y_i}=P_T[y_i,x_{iy}].$ 
		
		By Claim~\ref{claim2.6}, we obtain the following.\\
		\noindent{\it Subclaim \ref{claim2.8}.1.} $ N_G(U_1) \cap V(B_{y_i}) =  N_G(\{y_i,z_i\}) \cap V(B_{y_i})$. 
		
		\noindent{\it Subclaim \ref{claim2.8}.2.} We have $x_{iy}x_{iz}\in E(G).$
		
		Indeed, if $x_{iy}x_{iz}\not\in E(G)$ we set $t\in N_{T}(x_i)\cap V(R\_Stem(T)).$ Then since $G[x_i, t, x_{iy}, x_{iz}]$ is not $K_{1,3}$-free we obtain either $x_{iy}t\in E(G)$ or $x_{iz}t\in E(G)$. Without loss of generality, we may assume that $x_{iy}t\in E(G).$ Consider the spanning tree  $T'=T-x_{i}x_{iy}+x_{iy}t.$ If $\deg_T(x_i)=3$ then we obtain $|L(R\_Stem(T'))|\leq |L(R\_Stem(T))|$ and $|R\_Stem(T')| < |R\_Stem(T)|,$ a contradiction with (C0) or (C1). Otherwise, we have $L(R\_Stem(T'))= L(R\_Stem(T)),$ $|R\_Stem(T')| = |R\_Stem(T)|,$ $|L(T')|=|L(T)|$ and $\sum_{i=1}^{l}\deg_{T'}(x_i)<\sum_{i=1}^{l}\deg_T(x_i).$ This violates the conditions (C3).  Subclaim \ref{claim2.8}.2 is proved.
		
		\noindent{\it Subclaim \ref{claim2.8}.3.} If $x \in N_G(y_i)\cap V(B_{y_i})$ then $x^- \notin N_G(z_i)\cap V(B_{y_i})$.
		
		Suppose that there exists $x \in N_G(y_i)\cap B_{y_i}$ such that $ x^- \in N_G(z_i)\cap B_{y_i}$. Consider the spanning tree $T' := T+\{xy_i,z_ix^-\}-\{xx^-,x_{iy}x_i\}$. Then $|L(R\_Stem(T'))|\leq |L(R\_Stem(T))|.$ If $\deg_T(x_i)=3$ then $x_i$ is not a branch vertex of $T'.$ Hence $|R\_Stem(T')| < |R\_Stem(T)|,$ this contradicts the condition (C1). Otherwise, we have $|L(R\_Stem(T'))|= |L(R\_Stem(T))|,$ $|R\_Stem(T')| = |R\_Stem(T)|$ and $|L(T')|< |L(T)|.$ This is a contradiction with the condition (C2). Therefore,
		Subclaim \ref{claim2.8}.3 holds.
		
		\noindent{\it Subclaim \ref{claim2.8}.4.} If $x \in N_G(y_i)\cap V(B_{y_i})$ then $x^- \notin N_G(x_i)\cap V(B_{y_i})$.
		
		Suppose that there exists $x \in N_G(y_i)\cap V(B_{y_i})$ such that $ x^- \in N_G(x_i)\cap V(B_{y_i})$ for some $w\in L(R\_Stem(T))$. By Subclaim \ref{claim2.8}.2, consider the spanning tree $T' := T+\{xy_i,x_ix^-,x_{iy}x_{iz}\}-\{xx^-,x_{i}x_{iy}, x_ix_{iz}\}$. Then  $|L(R\_Stem(T'))|\leq |L(R\_Stem(T))|$ and $|R\_Stem(T')| \leq |R\_Stem(T)|,$ this contradicts the conditions either (C0) or (C1). Otherwise, we have $|L(R\_Stem(T'))|= |L(R\_Stem(T))|,$ $|R\_Stem(T')| = |R\_Stem(T)|$ and $|L(T')|< |L(T)|.$ This contradicts with the condition (C2). Therefore,
		Subclaim \ref{claim2.8}.4 holds.\\
		
		\noindent{\it Subclaim \ref{claim2.8}.5.} We have $x_{iy} \notin N_G(z_i).$
		
		Indeed, suppose to the contrary that $x_{iy}z_i\in E(G).$ We consider the spanning tree $T':= T+x_{iy}z_i-x_ix_{iy}.$ Hence, $T'$ is a spanning tree of $G$ satisfying $|L(R\_Stem(T'))|\leq|L(R\_Stem(T))|,$ $ |R\_Stem(T')|\leq|R\_Stem(T)|$ and $|L(T')|<|L(T)|,$ where $z_i$ is not a leaf of $T'.$ This contradicts the conditions (C0), (C1) or (C2). Subclaim \ref{claim2.8}.5 is proved.
		
		\noindent{\it Subclaim \ref{claim2.8}.6.} If $x \in N_G(z_i)\cap V(B_{y_i})$ then $x_ix \notin E(G)$.
		
		Indeed, assume that $x_ix\in E(G).$ By Subclaim \ref{claim2.8}.5 and Claim \ref{claim2.5.1}, we obtain $x_iz_i \not\in E(G)$ and there exists $x^{+}.$ Combining with $G[x, x_i,x^{+}, z_i]$ is not $K_{1,3}$-free we get $x^{+}x_i\in E(G)$ or $x^{+}z_i \in E(G).$\\
		If $ x^+x_i \in E(G).$ Combining with Subclaim \ref{claim2.8}.2, we consider the spanning tree
		$$T':=\left\{\begin{array}{ll}T+\{x_{iy}x_{iz},z_ix\}-\{xx^+, x_ix_{iz}\}
		& \;\mbox{ if } x^+=x_{iy},\\
		T+\{x^{+}x_i,x_{iy}x_{iz},z_ix\}-\{xx^+,x_ix_{iy}, x_ix_{iz}\}
		& \;\mbox{ if } x^+\not=x_{iy}.\\
		\end{array}\right.$$
		Hence  $|L(R\_Stem(T'))|\leq |L(R\_Stem(T))|.$ If $\deg_T(x_i)=3$ then $x_i$ is not a branch vertex of $T'.$ Hence $|R\_Stem(T')| < |R\_Stem(T)|,$ this contradicts the condition (C1). Otherwise, we have $|L(R\_Stem(T'))|= |L(R\_Stem(T))|,$ $|R\_Stem(T')| = |R\_Stem(T)|$ and $|L(T')|< |L(T)|$, this contradicts the condition (C2). \\
		Otherwise, we have $ x^+z_i \in E(G).$ We set $t\in N_{T}(x_i)\cap V(R\_Stem(T)).$ Since $G[x_i, t, x_{iz}, x]$ is not $K_{1,3}$-free we obtain either $xt\in E(G)$ or $x_{iz}t\in E(G)$ or $xx_{iz}\in E(G)$. Consider the spanning tree
		$$T':=\left\{\begin{array}{ll}T+\{xt,x^{+}z_i\}-\{xx^{+},x_ix_{iy}\},
		& \;\mbox{ if } xt\in E(G),\\
		T+x_{iz}t-x_ix_{iz},
		& \;\mbox{ if } x_{iz}t\in E(G),\\
		T+\{xx_{iz},x^{+}z_i\}-\{xx^{+},x_ix_{iz}\},
		& \;\mbox{ if } xx_{iz}\in E(G).\\
		\end{array}\right.$$
		Then we have $|L(R\_Stem(T'))|\leq |L(R\_Stem(T))|,$ $|R\_Stem(T')| \leq |R\_Stem(T)|,$ $|L(T')|\leq |L(T)|$ and $\sum_{i=1}^{l}\deg_{T'}(x_i)<\sum_{i=1}^{l}\deg_T(x_i).$ This violates the conditions (C0), (C1), (C2) or (C3).\\
		Subclaim \ref{claim2.8}.6 holds.

		On the other hand, it follows from Claim \ref{claim2.6} that.\\
		\noindent{\it Subclaim \ref{claim2.8}.7.} We have $N_{G}(w)\cap V(B_{y_i})=\emptyset$ for all $w\in L(R\_Stem(T))\setminus \{x_i\}.$
		
		By Subclaims \ref{claim2.8}.3-\ref{claim2.8}.4 we conclude that  $\{y_i\}, N_G(y_i) \cap V(P_T[y_i,x_i])$ and $ \left(N_G(\{z_i,x_i\}) \cap (P_T[y_i,x_i]\right)^{+} $ are pairwise disjoint subsets in $P_T[y_i,x_i]$. Combining with Claim \ref{claim2.5.1} and Subclaims \ref{claim2.8}.1, \ref{claim2.8}.6-\ref{claim2.8}.7, we obtain 
		\begin{eqnarray*} 
			\sum_{u\in U}|N_G(u) \cap V(B_{y_i}) |& =& | N_G(y_i) \cap V(B_{y_i})|+| N_G(z_i) \cap V(B_{y_i})|+| N_G(x_i) \cap V(B_{y_i})|\\
			&=& | N_G(y_i) \cap V(B_{y_i})|+| N_G(\{z_i,x_i\}) \cap V(B_{y_i})|\\
			&=&| N_G(y_i) \cap V(P_T[y_i,x_i])|+| (N_G(\{z_i, x_i\}) \cap P[y_i,x_i])^{+}|\\
			& \leq & |P_T[y_i,x_i]|- 1= |B_{y_i}|.
		\end{eqnarray*}
		This completes the proof of Claim~\ref{claim2.8}.
	\end{proof}
	Now, repeating the proof of Theorem \ref{thm1} for the subtree $R\_Stem(T)$ we obtain the following claim.
	\begin{claim}\label{claim-stem}
		$|N_{G}(L(R\_Stem(T)))\cap V(R\_Stem(T))|\leq |R\_Stem(T)|-|L(R\_Stem(T))|.$
	\end{claim}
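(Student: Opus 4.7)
The plan is to transfer the argument of Theorem~\ref{thm1} to the subtree $R:=R\_Stem(T)$ viewed as a tree embedded in the claw-free graph $G$ (rather than as a spanning tree of $G$). Define oblique neighbors of an edge $e\in E(R)$ with respect to $R$ exactly as in the definitions preceding the proof of Theorem~\ref{thm1}; paths in $R$ coincide with paths in $T$ because $R$ is a subtree of $T$. Write $L:=L(R)=\{x_1,\dots,x_l\}$, so $l\geq k+1$ by assumption.

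First I would show that $L$ is pseudoindependent with respect to $R$, mimicking Claim~\ref{claim2}. Suppose two leaves $x_i,x_j\in L$ were both oblique neighbors of a common edge $e\in E(R)$. Since $e$ lies inside $R$, the edge swap used in Claim~\ref{claim2} (case split on whether $g(e,x_i)=g(e,x_j)$, with claw-freeness of $G$ supplying an extra edge in the degenerate case) modifies only edges of $R$ and produces a spanning tree $T'$ of $G$ from $T$. Because $V(R')=V(R)$ and the leaf-branch paths of $T$ stay attached to $R'$ at the same vertices, the exchange strictly decreases $|L(R\_Stem(T'))|$, contradicting (C0).

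Next I would replay Claim~\ref{claim3} for branch vertices of $R$. For each $b\in B(R)$, claw-freeness of $G$ forces at most one neighbor $s\in N_R(b)$ to fail being adjacent in $G$ to every other element of $N_R(b)$. For each remaining $s_t\in N_R(b)\setminus\{s\}$, the edge $bs_t$ has no oblique neighbor in $L$; otherwise the exchange from Claim~\ref{claim3} yields a spanning tree $T'$ of $G$ with $|L(R\_Stem(T'))|<|L(R\_Stem(T))|$, again contradicting (C0). Summing these contributions across branch vertices of $R$ and subtracting the at most $|B(R)|-1$ edges of $R$ joining two branch vertices (as in Claim~\ref{claim4}), at least $|L|-1$ edges of $R$ have no oblique neighbor in $L$.

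To conclude, for any $v\in V(R)$ the neighbors of $v$ inside $V(R)$ are in bijection with the edges of $R$ having $v$ as an oblique neighbor, exactly as at the end of the proof of Theorem~\ref{thm1}. Combined with the pseudoindependence of $L$ and the edge count above,
\[
|N_G(L)\cap V(R)|\;\leq\;\sum_{v\in L}|N_G(v)\cap V(R)|\;\leq\;|E(R)|-(|L|-1)\;=\;|V(R)|-|L|,
\]
which is precisely Claim~\ref{claim-stem}. The main obstacle is verifying, for each case of every exchange argument borrowed from the proof of Theorem~\ref{thm1}, that the rearranged spanning tree $T'$ really satisfies $|L(R\_Stem(T'))|<|L(R\_Stem(T))|$ rather than merely improving $|L(T)|$ or $|R\_Stem(T)|$; this requires tracking how modifying edges inside $R$ can alter which vertices of $T$ remain branch vertices of $T'$, and hence how $R\_Stem(T')$ compares with $R'$. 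The key reason this works is that every modification lies strictly inside $R$ and leaves the leaf-branch paths of $T$ undisturbed, so the change in $|L(R\_Stem(T'))|$ mirrors the change in $|L(R')|$ produced by the original exchange applied directly to $R$.
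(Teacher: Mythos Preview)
Your proposal is correct and follows exactly the approach the paper intends: the paper's entire proof of this claim is the single sentence ``repeating the proof of Theorem~\ref{thm1} for the subtree $R\_Stem(T)$,'' and you carry this out in detail. Your identification of the main obstacle is apt, and your heuristic that modifications inside $R$ leave the leaf-branch paths undisturbed is the right reason the transfer works; to make it airtight you can note that every branch vertex of $T'$ lies in $V(R')$ (degrees on leaf-branch paths are unchanged), so $R\_Stem(T')$ is a subtree of $R'$, and since any subtree $S$ of a tree $R'$ with $|S|\geq 2$ satisfies $|L(S)|\leq |L(R')|$ (compare $|L(\cdot)|=2+\sum_{b}(\deg(b)-2)$ termwise), each exchange that lowers $|L(R')|$ forces $|L(R\_Stem(T'))|<|L(R\_Stem(T))|$, contradicting (C0) as you claim.
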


	By Claim~\ref{claim2.4} and Claims~\ref{claim2.7}-\ref{claim-stem} we obtain that 
	\begin{eqnarray*}
		\deg_G(U)&=&\sum_{i=1}^{l} \bigg(\sum_{u\in U}|N_G(u)\cap V(B_{y_i}) |+\sum_{u\in U}|N_G(u) \cap V(B_{z_i}) |\bigg)+\\
		&+&\sum_{p\in L(T)\setminus U_1}\sum_{u\in U}|N_G(u) \cap V(B_{p}) |
		+ |N_{G}(L(R\_Stem(T)))\cap V(R\_Stem(T))|\\
		&\leq& \displaystyle \sum_{i=1}^{l}\vert B_{y_i}\vert + \displaystyle \sum_{i=1}^{l} \vert B_{z_i}\vert+\displaystyle \sum_{p\in L(T)\setminus U_1} \vert B_p \vert + \vert R\_Stem(T)\vert - |L(R\_Stem(T))|\\
		&=& \vert G \vert - \vert L(R\_Stem(T)) \vert\\
		&=& \vert G \vert - l.
	\end{eqnarray*}
	Hence 
	\begin{eqnarray*}
		\sigma_{3k+3}(G) &\leq& \sigma_{3l}(G)\leq \deg_G(U)\\
		&\leq & \vert G \vert -l \leq \vert G \vert -k-1.
	\end{eqnarray*}
	This contradicts the assumption of Theorem \ref{thm2}. Therefore, the proof of Theorem \ref{thm2} is completed.


\end{document}